\newtheorem*{rep@theorem}{\rep@title}
\newcommand{\newreptheorem}[2]{%
\newenvironment{rep#1}[1]{%
 \def\rep@title{#2 \ref{##1}}%
 \begin{rep@theorem}}%
 {\end{rep@theorem}}}
\newtheorem{intro_thm}{Theorem}
\newtheorem{intro_prop}[intro_thm]{Proposition}
\theoremstyle{plain}
\newtheorem{teor}{Theorem}[section]
\newtheorem{lem}[teor]{Lemma}
\newtheorem{cor}[teor]{Corollary}
\newtheorem{prop}[teor]{Proposition}
\newtheorem{setup}[teor]{Setup}
\newtheorem{assumption}[teor]{Assumption}
\theoremstyle{definition}
\newtheorem{deft}[teor]{Definition}
\theoremstyle{remark}
\newtheorem{oss}[teor]{Remark}
\DeclareMathOperator\rk{rk}
\DeclareMathOperator\upC{\textup{C}}
\DeclareMathOperator\upH{\textup{H}}
\DeclareMathOperator\upJ{\textup{J}}
\DeclareMathOperator\upL{\textup{L}}
\DeclareMathOperator\upT{\textup{T}}
\DeclareMathOperator\bbC{\mathbb{C}}
\DeclareMathOperator\bbH{\mathbb{H}}
\DeclareMathOperator\bbN{\mathbb{N}}
\DeclareMathOperator\bbR{\mathbb{R}}
\DeclareMathOperator\bbS{\mathbb{S}}
\DeclareMathOperator\bbZ{\mathbb{Z}}
\DeclareMathOperator\calB{\mathcal{B}}
\DeclareMathOperator\calD{\mathcal{D}}
\DeclareMathOperator\calJ{\mathcal{J}}
\DeclareMathOperator\calM{\mathcal{M}}
\DeclareMathOperator\calO{\mathcal{O}}
\DeclareMathOperator\calS{\mathcal{S}}
\DeclareMathOperator\calT{\mathcal{T}}
\DeclareMathOperator\calX{\mathcal{X}}
\DeclareMathOperator\calY{\mathcal{Y}}
\DeclareMathOperator\pu{\textup{PU}}
\begin{document}

\title[Maximal measurable cocycles of surface groups]{Algebraic hull of maximal measurable cocycles of surface groups into Hermitian Lie groups}

\author[]{A. Savini}
\address{Department of Mathematics, University of Bologna, Piazza di Porta San Donato 5, 40126 Bologna, Italy}
\email{alessio.savini5@unibo.it}

\thanks{}

\keywords{Hermitian Lie group, tube type, tightness, Shilov boundary, maximal measurable cocycle, K\"ahler form, Toledo invariant}
\subjclass[2010]{}
\date{\today.\ \copyright{\ A. Savini 2020}
The author was partially supported by the project \emph{Geometric and harmonic analysis with applications}, funded by EU Horizon 2020 under the Marie Curie grant agreement No 777822.}.
\maketitle

\begin{abstract}
Following the work of Burger, Iozzi and Wienhard for representations, in this paper we introduce the notion of maximal measurable cocycles of a surface group. More precisely, let $\mathbf{G}$ be a semisimple algebraic $\bbR$-group such that $G=\mathbf{G}(\bbR)^\circ$ is of Hermitian type. If $\Gamma \leq L$ is a torsion-free lattice of a finite connected covering of $\pu(1,1)$, given a standard Borel probability $\Gamma$-space $(\Omega,\mu_\Omega)$, we introduce the notion of Toledo invariant for a measurable cocycle $\sigma:\Gamma \times \Omega \rightarrow G$.

The Toledo invariant remains unchanged along $G$-cohomology classes and its absolute value is bounded by the rank of $G$. This allows to define maximal measurable cocycles. We show that the algebraic hull $\mathbf{H}$ of a maximal cocycle $\sigma$ is reductive and the centralizer of $H=\mathbf{H}(\bbR)^\circ$ is compact. If additionally $\sigma$ admits a boundary map, then $H$ is of tube type and $\sigma$ is cohomologous to a cocycle stabilizing a unique maximal tube type subdomain. This result is analogous to the one obtained for representations. 

In the particular case $G=\textup{PU}(n,1)$ maximality is sufficient to prove that $\sigma$ is cohomologous to a cocycle preserving a complex geodesic. 

We conclude with some remarks about boundary maps of maximal Zariski dense cocycles. 
\end{abstract}

\section{Introduction}

Given a torsion-free lattice $\Gamma \leq L$ in a semisimple Lie group $L$, any representation $\rho:\Gamma \rightarrow H$ into a locally compact group $H$ induces a well-defined map at the level of continuous bounded cohomology groups. Hence fixed a preferred bounded class in the cohomology of $H$, one can pullback it and compare the resulting class with the fundamental class determined by $\Gamma$ via Kronecker pairing. This is a standard way to obtain \emph{numerical invariants} for representations, whose importance has become evident in the study of rigidity and superrigidity properties. In many cases (such as the Toledo invariant, the Volume invariant or the Borel invariant) a numerical invariant has bounded absolute value and the maximum is attained if and only if the representation can be extended to a representation $L \rightarrow H$ of the ambient group.  

Several examples of these phenomena are given by the work of Bucher, Burger, Iozzi \cite{iozzi02:articolo,bucher2:articolo,BBIborel} in the case of representations of real hyperbolic lattices, by Burger and Iozzi \cite{BIcartan} and by Duchesne and Pozzetti \cite{Pozzetti,duchesne:pozzetti} for complex hyperbolic lattices and by the work of Burger, Iozzi and Wienhard \cite{BIW07,BIW09,BIW1} when the target group is of Hermitian type. In the latter case, of remarkable interest is the analysis of the representation space $\textup{Hom}(\Gamma,G)$ when $G$ is a group of Hermitian type and $\Gamma$ is a lattice in a finite connected covering of  $\pu(1,1)$, that is a hyperbolic surface group. Burger, Iozzi and Wienhard \cite{BIW1} 
exploited the existence of a natural K\"ahler structure on the Hermitian symmetric space associated to $G$ in order to define the notion of \emph{Toledo invariant} of a representation $\rho:\Gamma \rightarrow G$. That invariant has bounded absolute value and its maximality has important consequences on the Zariski closure $\mathbf{H}=\overline{\rho(\Gamma)}^Z$ of the image of the representation. Indeed the authors show that in the case of maximality $\mathbf{H}$ is reductive, $H=\mathbf{H}(\bbR)^\circ$ has compact centralizer and it is of tube type and the representation $\rho$ is injective with discrete image and it preserves a unique maximal tube type subdomain \cite[Theorem 5]{BIW1}. A domain is of \emph{tube type} if it can be written in the form $V+i\Omega$, where $V$ is a real vector space and $\Omega \subset V$ is an open convex cone. Maximal tube type subdomains in a Hermitian symmetric space $\calX$ generalize the notion of complex geodesic in $\bbH^n_{\bbC}$ and they are all $G$-conjugated. 

A source of inspiration for \cite[Theorem 5]{BIW1} is represented by the work of Toledo \cite{Toledo89}. Indeed he proved that maximal representations into $\textup{PU}(n,1)$ must preserve a complex geodesic. Partial results in the direction of \cite[Theorem 5]{BIW1} were obtained by several authors. It is worth mentioning the papers by Hern\'andez \cite{Her91}, by Koziarz and Maubon \cite{koziarz:maubon} and by Bradlow, Garc\'ia-Prada and Gothen \cite{garcia:geom,garcia:dedicata}. In the latter case those results were obtained using different techniques based on the notion of Higgs bundle (see also \cite{koziarz:maubon:2} for the study of representations of complex hyperbolic lattices via Higgs bundles).

It is worth noticing that in the particular case of split real groups and surfaces without boundary, the set of maximal representations contains the Hitchin component \cite{hitchin}. The Hitchin component has been sistematically studied by serveral mathematicians. For instance Labourie \cite{labourie} focused his attention on the Asonov property, whereas Fock and Goncharov \cite{fock:hautes,Fock:adv} related the Hitchin component with the notion of Lusztig's positivity. 

A crucial point in the proof of \cite[Theorem 5]{BIW1} is that maximal representations are \emph{tight}, that is the seminorm of the pullback of the bounded  K\"ahler class $\kappa^b_G$ is equal to the norm of $\kappa^b_G$. The tightness property has an analytic counterpart in terms of maps between symmetric spaces and Burger, Iozzi and Wienhard \cite{BIW09} give a complete characterization of tight subgroups of a Lie group of Hermitian type.

Recently the author \cite{savini3:articolo} together with Moraschini \cite{moraschini:savini,moraschini:savini:2} and Sarti \cite{savini:sarti} has applied bounded cohomology techniques to the study measurable cocycles with an essentially unique boundary map. The existence of a boundary map allows to define a pullback in bounded cohomology as in \cite{burger:articolo} and hence to develop a theory of numerical invariants, called \emph{multiplicative constants}, also in the context of measurable cocycles. 

The main goal of this paper is the study of measurable cocycles of surface groups. Let $\Gamma \leq L$ be a torsion-free lattice of a finite connected covering $L$ of $\pu(1,1)$. Consider a standard Borel probability $\Gamma$-space $(\Omega,\mu_\Omega)$ and let $\mathbf{G}$ be a semisimple real algebraic group such that $G=\mathbf{G}(\bbR)^\circ$ is of Hermitian type. Using a measurable cocycle $\sigma:\Gamma \times \Omega \rightarrow G$, we can define a pullback map 
in bounded cohomology and hence mimic the techniques used in \cite{moraschini:savini,moraschini:savini:2} to define the \emph{Toledo invariant of $\sigma$}. In the particular case that $\sigma$ admits a boundary map $\phi:\bbS^1 \times \Omega \rightarrow \check{\calS}_G$ into the Shilov boundary of $G$, then we recover the same approach developed in \cite{moraschini:savini,moraschini:savini:2} (see Lemma \ref{lem:pullback:cocycle:boundary}). In an analogous way to what happens for representations, the Toledo invariant is constant along $G$-cohomology classes and has absolute value bounded by $\rk(\calX)$, the rank of the symmetric space $\calX$ associated to $G$. Thus it makes sense to speak about \emph{maximal measurable cocycles}. This will be a particular example of \emph{tight cocycles} (see Definition \ref{def:tight:cocycle}). 

Maximality allows to give a characterization of the \emph{algebraic hull} of a measurable cocycle, as stated in the following 

\begin{intro_thm}\label{teor:maximal:alghull}
Let $\Gamma \leq L$ be a torsion-free lattice of a finite connected covering $L$ of $\pu(1,1)$ and let $(\Omega,\mu_\Omega)$ be a standard Borel probability $\Gamma$-space. Let $\mathbf{G}$ be a semisimple algebraic $\bbR$-group such that $G=\mathbf{G}(\bbR)^\circ$ is a Lie group of Hermitian type. Consider a measurable cocycle $\sigma:\Gamma \times \Omega \rightarrow G$. Denote by $\mathbf{H}$ the algebraic hull of $\sigma$ in $\mathbf{G}$ and set $H=\mathbf{H}(\bbR)^\circ$. If $\sigma$ is maximal, then 
\begin{enumerate}
	\item the algebraic hull $\mathbf{H}$ is reductive;
	\item the centralizer $Z_G(H)$ is compact;
\end{enumerate}
If additionally $\sigma$ admits a boundary map $\phi:\bbS^1 \times \Omega \rightarrow \check{\calS}_{\calY}$ into the Shilov boundary of the symmetric space $\calY$ associated to $H$, then
\begin{enumerate}
	\item[(3)] the symmetric space $\calY$ is Hermitian of tube type;
	\item[(4)] it holds $\mathbf{H}(\bbR) \subset \textup{Isom}(\calT)$ for some maximal tube type subdomain $\calT$ of $\calX$. Equivalently there exists a cocycle cohomologous to $\sigma$ which preserves $\calT$. 
\end{enumerate}
\end{intro_thm}

The above theorem should be interpreted as a suitable adaptation of \cite[Theorem 5]{BIW1} to the context of maximal measurable cocycles. The first two properties are immediate consequences of the tightness of maximal cocycles, as shown in Theorem \ref{teor:alg:hull:tight}. The tube type condition is more involving and it is proved in Theorem \ref{teor:symmetric:tube}.  

It is worth mentioning that Theorem \ref{teor:maximal:alghull} can be translated in the language of principal $G$-bundles. Indeed suppose that $\Omega$ is actually a smooth manifold and $\sigma$ is the cocycle associated to the section of a principal $G$-bundle $P$ with a $\Gamma$-action. The maximality assumption is telling us that we can find a $\Gamma$-invariant principal subbundle of $P$ with fibers of tube type. This has application for instance to the reducibility of $G$-structures on $\Omega$. 

In the particular case $G=\textup{PU}(n,1)$, imitating \cite[Theorem 8]{BIgeo} and \cite[Theorem C]{koziarz:maubon}, maximality implies the existence of a cohomologous cocycle preserving a complex geodesic. Indeed we are going to prove the following

\begin{intro_prop}\label{prop:complex:geodesic}
Let $L$ a finite connected covering of $\textup{PU}(1,1)$ and let $\Gamma \leq L$ be a torsion-free lattice. Let $(\Omega,\mu_\Omega)$ be a standard Borel probability. If a measurable cocycle $\sigma:\Gamma \times \Omega \rightarrow \textup{PU}(n,1)$ is maximal, then it is cohomologous to a measurable cocycle which preserves a complex geodesic. 
\end{intro_prop}

Notice that in the previous statement the requirement of a boundary map is not necessary, since the existence of such a map will be part of the proof. 

We conclude with some remarks about boundary maps of maximal Zariski dense cocycles. For representations, the relation between maximality and boundary maps preserving positivity of triples were studied by Guichard \cite{Guichard}, Labourie \cite{labourie} and Fock and Goncharov \cite{fock:hautes}

Here we attempt to extend \cite[Theorem 5.2]{BIW1} to the context of measurable cocycles. Given a maximal Zariski dense cocycle, we can construct a boundary map which has left-continuous (respectively right-continuous) slices. Moreover each slice preserves \emph{transversality} and it is \emph{monotone}, as proved in Theorem \ref{teor:boundary:map}. Unfortunately, to get the statement, we need to make an additional assumption on the measurable map $\phi:\bbS^1 \times \Omega \rightarrow \check{\calS}_{\calX}$. More precisely we need to assume that the essential image of almost every slice intersects nicely all closed algebraic subset of $\check{\calS}_{\calX}$ (Assumption \ref{ass:zariski:zero:measure}). This assumption is clearly verified by cocycles cohomologous to maximal Zariski dense representations \cite[Proposition 5.2]{BIW1}, but we do not know more generally under which conditions on both $\sigma$ or $\phi$ this is true and it would be interesting to know it.  
The proof of Theorem \ref{teor:boundary:map} follows the line of \cite[Section 8]{BIWL} and of \cite[Theorem 5.2]{BIW1}.

\subsection*{Acknowledgements}

I would like to thank Michelle Bucher and Maria Beatrice Pozzetti for the useful conversations about continuous bounded cohomology. I am grateful to Maria Beatrice for having suggested me to define pullback maps induced by measurable cocycles without the use of boundary maps. 

I finally thank the referee for the careful work and the precious suggestions which allowed to improve the quality of the paper.

\subsection*{Plan of the paper}
In Section \ref{sec:preliminary} we recall the preliminary definitions and results that we need in the paper. In Section \ref{sec:measurable:cocycles} we remind the notion of measurable cocycle and of cohomology class determined by a cocycle. Of particular importance for our purposes will be the definitions of algebraic hull and Zariski density. Then we conclude the section with some elements of boundary theory. Section \ref{sec:burger:monod} is devoted to continuous and continuous bounded cohomology. We remind the functorial approach by Burger and Monod. In Section \ref{sec:no:boundary:map} we describe the theoretical background to define pullback in terms of measurable cocycles. When a boundary map exists we recover the approach already studied by the author and Moraschini (see Lemma \ref{lem:pullback:cocycle:boundary}). The last part is devoted to Hermitian symmetric spaces (Section \ref{sec:hermitian:groups}). 

The main theorem of paper is proved in Section \ref{sec:maximal:cocycles}. We first introduce the notion of Toledo invariant of a measurable cocycle in Section \ref{sec:toledo:invariant}. In Section \ref{sec:maximal:cocycle:thm} it appears the definition of maximal cocycle. Maximal cocycles are tight by Proposition \ref{prop:maximal:tight}. In Section \ref{sec:zariski:maximal} we focus our attention on maximal Zariski dense cocycles. The tightness property together with Theorem \ref{teor:symmetric:tube} allows to prove Theorem \ref{teor:maximal:alghull}. We conclude with Section \ref{sec:boundary:map}, where we prove Theorem \ref{teor:boundary:map}.

\section{Preliminary definitions and results}\label{sec:preliminary}

\subsection{Measurable cocycles}\label{sec:measurable:cocycles}

The following section is devoted to a quick review about measurable cocycles theory. We are going to recall the definitions of  both measurable cocycle and cohomology class. Then we will introduce the  notion of algebraic hull and we will conclude the section with some elements of boundary theory. For a more detailed discussion about those topics we refer the reader to the work of both Furstenberg \cite{furst:articolo73,furst:articolo} and Zimmer \cite{zimmer:preprint,zimmer:annals,zimmer:libro}. 

Consider two locally compact second countable groups $G,H$ endowed with their Haar measurable structure. Given a standard Borel measure space $(\Omega,\mu_\Omega)$ we say that it is a \emph{$G$-space} if $G$ acts on $\Omega$ by measure-preserving transformations. Additionally if $\mu_\Omega$ is a probability measure, we are going to call $(\Omega,\mu_\Omega)$ a \emph{standard Borel probability $G$-space}. Given another measure space $(\Theta,\mu_\Theta)$, we are going to denote by $\textup{Meas}(\Omega,\Theta)$ the space of measurable functions with the topology of the \emph{convergence in measure}. The latter is generated by the base of open sets of the form 
\begin{equation}\label{eq:open:set}
\mathscr{S}(K,h,\alpha,\varepsilon)=\{ f \in \textup{Meas}(\Omega,\Theta)| \  \mu_\Omega\{ x \in K | d(f(x),h(x))>\alpha \}<\varepsilon \} \ ,
\end{equation}
where $K \subseteq \Omega$ is measurable, $h \in \textup{Meas}(\Omega,\Theta)$ and $\alpha,\varepsilon >0$. Later on we are going to consider the Borel structure generated by the topology of convergence in measure, that is the measurable structure generated by countable unions and intersections of sets given by Equation \eqref{eq:open:set}. We refer the reader to \cite[Chapter VII.1]{margulis:libro} for more details. 

\begin{deft}\label{def:measurable:cocycle}
Let $G,H$ two locally compact second countable groups and let $(\Omega,\mu_\Omega)$ be a standard Borel probability $G$-space. A measurable function $\sigma:G \times \Omega \rightarrow H$ is a \emph{measurable cocycle} if it holds
\begin{equation}\label{eq:measurable:cocycle}
\sigma(g_1 g_2,s)=\sigma(g_1,g_2 s)\sigma(g_2,s) \ ,
\end{equation}
for almost every $g_1,g_2 \in G$ and almost every $s \in \Omega$. 
\end{deft}

Measurable cocycles are quite ubiquitous in Mathematics and Equation \eqref{eq:measurable:cocycle} can be suitably interpreted as a naive generalization to the measurable context of the chain rule for differentiation of smooth functions. By writing a measurable cocycle $\sigma$ as an element $\sigma \in \textup{Meas}(G,\textup{Meas}(\Omega,H))$, Equation \eqref{eq:measurable:cocycle} boils down the cocycle condition. Indeed $\sigma$ may be interpreted as a Borel $1$-cocycle in the sense of Eilenberg-MacLane (see \cite{feldman:moore,zimmer:preprint} for more details about this interpretation). Following this line, one could naturally ask when two different cocycles are cohomologous.

\begin{deft}\label{def:cohomologous:cocycles}
Let $\sigma:G \times \Omega \rightarrow H$ be a measurable cocycle and let $f:\Omega \rightarrow H$ be a measurable function. The \emph{$f$-twisted cocycle of $\sigma$} is defined as 
$$
\sigma^f:G \times \Omega \rightarrow H, \ \ \sigma^f(g,s):=f(gs)^{-1}\sigma(g,s)f(s) \ . 
$$
We say that two cocycles $\sigma_1,\sigma_2:G \times \Omega \rightarrow H$ are \emph{cohomologous} if there exists a measurable function $f:\Omega \rightarrow H$ such that 
$$
\sigma_2^f=\sigma_1 \ . 
$$
\end{deft}

Choosing a measurable function $f:\Omega \rightarrow H$ is a typical way to construct cocycles starting from representations. Indeed, given a continuous representation $\rho:G \rightarrow H$, one can verifiy that the measurable function 
$$
\sigma_\rho:G \times \Omega \rightarrow H \ , \ \ \sigma_\rho(g,s):=\rho(g) \ , 
$$
is a measurable cocycle as a consequence of the morphism condition. This allows to see representations theory into the wider world of measurable cocycles theory. Additionally this offers us the possibility to interpret the notion of cohomologous cocycles as a generalization of conjugated representations. 

Given a representation $\rho:G \rightarrow H$, if the image is not closed, it is quite natural to consider its closure, which it is still a subgroup of $H$. Unfortunately the image of a cocycle has no structure a priori. Nevertheless, if $H$ corresponds to the real points of a real algebraic group, then there is a notion which is in some sense similar to take the closure of the image of a representation.
\begin{deft}
Suppose that $\mathbf{H}$ is a real algebraic group.  Let $\sigma:G \times \Omega \rightarrow \mathbf{H}(\bbR)$ be a measurable cocycle. The \emph{algebraic hull associated to $\sigma$} is (the conjugacy class of) the smallest algebraic subgroup $\mathbf{L}$ of $\mathbf{H}$ such that $\mathbf{L}(\bbR)$ contains the image of a cocycle cohomologous to $\sigma$. 
\end{deft}

As proved in \cite[Proposition 9.2]{zimmer:libro} this notion is well-defined by the descending chain condition on algebraic subgroups and it depends only the cohomology class of the cocycle. It is worth noticing that the algebraic hull can be exploited to give a concept of Zariski density for measurable cocycles. More precisely, we have the following
\begin{deft}\label{def:zariski:dense}
Consider $\mathbf{H}$ a real algebraic group. We say that a measurable cocycle $\sigma:G \times \Omega \rightarrow \mathbf{H}(\mathbb{R})$ is \emph{Zariski dense} it its algebraic hull is exactly $\mathbf{H}$. 
\end{deft}
In Section \ref{sec:zariski:maximal} we are going to focus our attention on Zariski dense cocycles of surface groups to obtain important properties for their targets.

We conclude this brief discussion about measurable cocycles introducing some elements of boundary theory. In order to do this, we are going to assume that $G$ is a semisimple Lie group of non-compact type. Let $P$ be a \emph{minimal parabolic subgroup} of $G$ and suppose that $H$ acts measurably on a measure space $(Y,\nu)$ by preserving the measure class of $\nu$. 

\begin{deft}\label{def:boundary:map}
Let $\sigma:G \times \Omega \rightarrow H$ be a measurable cocycle. A \emph{(generalized) boundary map} is a measurable map $\phi:G/P \times \Omega \rightarrow Y$ which is $\sigma$-equivariant, that is 
$$
\phi(g \xi,g s)=\sigma(g,s)\phi(\xi,s) \ , 
$$
for every $g \in G$ and almost every $\xi \in G/P, s \in \Omega$. 
\end{deft}

It is easy to check that, if $\phi:G/P \times \Omega \rightarrow Y$ is a boundary map for $\sigma$, then $\phi^f:G/P \times \Omega \rightarrow Y, \ \phi^f(\xi,s):=f(s)^{-1}\phi(\xi,s)$ is a boundary map for $\sigma^f$ for any measurable function $f:\Omega \rightarrow H$. 

The existence and the uniqueness of a boundary map associated to a cocycle $\sigma$ rely on the dynamical properties of $\sigma$. For a more detailed discussion about it we refer the reader to \cite{furst:articolo}. Boundary maps for measurable cocycles will be crucial in Section \ref{sec:zariski:maximal} to study the propreties of the target group of maximal measurable cocycles of surface groups.  

\subsection{Continuous bounded cohomology and functorial approach}\label{sec:burger:monod}

Given a locally compact group $G$ we are going to remind the notion of continuous and continuous bounded cohomology groups of $G$. A remarkable aspect of continuous bounded cohomology is that it can be computed using any strong resolution by relatively injective modules. For more details about continuous bounded cohomology and its functorial approach we refer to the work of Burger and Monod \cite{burger2:articolo,monod:libro}.

 We consider the set of \emph{real continuous bounded functions on $G^{\bullet+1}$} given by
\begin{align*}
\upC^\bullet_{cb}(G;\mathbb{R}):=\{ f : G^{\bullet+1} \rightarrow \mathbb{R} \ | & \ f \ \textup{is continuous and} \\
&\lVert f \rVert_\infty:=\sup_{g_0,\ldots,g_{\bullet}} | f(g_0,\ldots,g_{\bullet}) | < \infty \} \ , 
\end{align*}
where $| \ \cdot \ |$ is the usual absolute value on $\mathbb{R}$. Each $\upC^\bullet_{cb}(G;\mathbb{R})$ is a normed via the supremum norm and it can be endowed with an isometric action of $G$ defined by
\begin{equation}\label{eq:left:action}
(gf)(g_0,\ldots,g_\bullet):=f(g^{-1}g_0,\ldots,g^{-1}g_\bullet) \ ,
\end{equation}
where $f \in \upC^\bullet_{cb}(G;\bbR)$ and $g,g_0,\ldots,g_\bullet \in G$.  Notice that in this case $\bbR$ is endowed with the structure of \emph{trivial $G$-module}. Defining the \emph{standard homogeneous coboundary operator} by
$$
\delta^\bullet:\upC^\bullet_{cb}(G;\bbR) \rightarrow \upC^{\bullet+1}_{cb}(G;\bbR) \ , 
$$
$$
\delta^\bullet(f)(g_0,\ldots,g_{\bullet+1}):=\sum_{i=0}^{\bullet+1}(-1)^i f(g_0,\ldots,\hat g_i,\ldots,g_{\bullet+1}) \ ,
$$
we get a cochain complex $(\upC^\bullet_{cb}(G;\bbR),\delta^\bullet)$.  

\begin{deft}\label{def:bounded:cohomology}
Let $G$ be a locally compact group. The \emph{ $k$-th continuous bounded cohomology group} of $G$ is the $k$-th cohomology group of the $G$-invariant subcomplex $(\upC^\bullet_{cb}(G;\bbR)^G,\delta^\bullet)$, that is 
$$
\upH^k_{cb}(G;\bbR):=\upH^k(\upC^\bullet_{cb}(G;\bbR)^G) \ ,
$$
for every $k \geq 0$. 
\end{deft}

It is worth noticing that each cohomology group $\upH^\bullet_{cb}(G;\bbR)$ has a natural seminormed structure inherited by the normed structure on the continuous bounded cochains. 

By dropping the assumption of boundedness one can define similarly the complex of continuous cochains $(\upC^\bullet_c(G;\bbR),\delta^\bullet)$ and the standard inclusion $i:\upC^\bullet_{cb}(G;\bbR) \rightarrow \upC^\bullet_c(G;\bbR)$ induces a map at a cohomological level
$$
\textup{comp}^\bullet:\upH^\bullet_{cb}(G;\bbR) \rightarrow \upH^\bullet_c(G;\bbR) \ ,
$$
called \emph{comparison map}. 

Computing continuous bounded cohomology of a locally compact group $G$ using only the definition given above may reveal quite difficult. For this reason Burger and Monod \cite{burger2:articolo,monod:libro} introduced a way to compute continuous bounded cohomology groups based on the notion of resolutions. More precisely the authors showed \cite[Corollary 1.5.3]{burger2:articolo} that given any strong resolution $(E^\bullet,d^\bullet)$ of $\bbR$ by relatively injective Banach $G$-modules, it holds
$$
\upH^k_{cb}(G;\bbR) \cong \upH^k((E^\bullet)^G) \ ,
$$
for every $k \geq 0$. Since we will not need the notion of strong resolution and of relatively injective Banach $G$-module, we omit them and we refer to the book of Monod \cite{monod:libro} for more details. 

Unfortunately the isomorphism given above it is not isometric a priori, that is it may not preserve the seminormed structure. Nevertheless there are specific resolutions for which the isomorphism it is actually isometric. This is the case for instance when we consider the resolution of essentially bounded measurable functions $(\upL^\infty((G/Q)^{\bullet+1};\bbR),\delta^\bullet)$ on the quotient $G/Q$ \cite[Theorem 1]{burger2:articolo}, where $G$ is a semisimple Lie group of non-compact type and $Q \leq G$ is any amenable subgroup. 

Something relevant can be said also about the complex of bounded measurable functions $(\calB^\infty((G/Q)^{\bullet+1};\bbR),\delta^\bullet)$. Indeed the latter is a strong resolution of $E$ by \cite[Proposition 2.1]{burger:articolo}, together with the natural injection of coefficients. In this way, the projection on equivalence classes $\calB^\infty((G/Q)^{\bullet+1};\bbR) \rightarrow \textup{L}^\infty((G/Q)^{\bullet+1};\bbR)$ induces a well-defined map in cohomology
$$
\mathfrak{c}^\bullet:\textup{H}^\bullet(\calB^\infty((G/P)^{\bullet+1};\bbR)^G) \rightarrow \textup{H}^\bullet_{cb}(G;\bbR) \ , 
$$
as stated by Burger and Iozzi \cite[Proposition 2.2]{burger:articolo}. Hence any bounded measurable invariant cocycles naturally determines a class in the continuous bounded cohomology of $G$.

\subsection{Pullback maps induced by measurable cocycles} \label{sec:no:boundary:map}

In this brief section we are going to recall the pullback determined by a boundary map associated to a measurable cocycle.
We will actually introduce a more general way to define the pullback and we will show that it coincides with the approach introduced by the author and Moraschini \cite{moraschini:savini,moraschini:savini:2} when a boundary map exists. 

Let $\Gamma \leq G$ be a lattice of a semisimple Lie group of non-compact type. Let $H$ be another locally compact group and consider $(\Omega,\mu_\Omega)$ a standard Borel probability $\Gamma$-space. Given a measurable cocycle $\sigma:\Gamma \times \Omega \rightarrow H$, we can define the map
$$
\textup{C}^\bullet_b(\sigma):\textup{C}^\bullet_{cb}(H;\bbR) \rightarrow \textup{C}^\bullet_b(\Gamma;\bbR) \ ,
$$
$$
\psi \mapsto \textup{C}^\bullet_b(\sigma)(\psi)(\gamma_0,\ldots,\gamma_\bullet):=\int_\Omega \psi(\sigma(\gamma_0^{-1},s)^{-1},\ldots,\sigma(\gamma_\bullet^{-1},s)^{-1})d\mu_\Omega \ .
$$
Notice that the previous definition is motivated by the formula appearing in \cite[Theorem 5.6]{sauer:companion} and it is inspired by the cohomological induction introduced by Monod and Shalom \cite{MonShal} for measurable cocycles associated to couplings. 

\begin{lem}\label{lem:pullback:no:boundary}
For a measurable cocycle $\sigma:\Gamma \times \Omega \rightarrow H$, the map $\textup{C}^\bullet_b(\sigma)$ is a well-defined cochain map which restricts to the subcomplexes of invariant cochains
$$
\textup{C}^\bullet_b(\sigma):\textup{C}^\bullet_{cb}(H;\bbR)^H \rightarrow \textup{C}^\bullet_b(\Gamma;\bbR)^\Gamma \ ,
$$
and hence it induces a map in bounded cohomology 
$$
\textup{H}^\bullet_b(\sigma):\textup{H}^\bullet_{cb}(H;\bbR) \rightarrow \textup{H}^\bullet_b(\Gamma;\bbR) \ , \ \ \textup{H}^\bullet_b(\sigma)([\psi]):=\left[ \textup{C}^\bullet_b(\sigma)(\psi)\right] \ .
$$
\end{lem}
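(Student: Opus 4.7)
The plan is to verify, in order, (i) well-definedness of the map, (ii) the cochain property, (iii) the restriction to invariant subcomplexes, from which (iv) the descent to bounded cohomology is automatic.

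For (i), since $\Gamma$ is discrete and countable, the slice $s \mapsto \sigma(\gamma^{-1},s)$ is Borel measurable for each fixed $\gamma \in \Gamma$, so composing with the continuous bounded function $\psi$ yields a Borel measurable, bounded integrand on $\Omega$. Finiteness and the bound $\lVert \textup{C}^\bullet_b(\sigma)(\psi)\rVert_\infty \le \lVert \psi \rVert_\infty$ then follow from $\mu_\Omega$ being a probability measure. Property (ii) is immediate: $\delta^\bullet$ is a finite alternating sum of face operators that commute with the integral over $\Omega$ by linearity, so $\textup{C}^\bullet_b(\sigma) \circ \delta^\bullet = \delta^\bullet \circ \textup{C}^\bullet_b(\sigma)$.

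The heart of the argument is (iii). Given $\psi \in \textup{C}^\bullet_{cb}(H;\bbR)^H$ and $\gamma \in \Gamma$, using the action in Equation \eqref{eq:left:action} I would compute
\[
(\gamma \cdot \textup{C}^\bullet_b(\sigma)(\psi))(\gamma_0,\ldots,\gamma_\bullet) = \int_\Omega \psi\bigl(\sigma(\gamma_0^{-1}\gamma,s)^{-1},\ldots,\sigma(\gamma_\bullet^{-1}\gamma,s)^{-1}\bigr) d\mu_\Omega(s).
\]
The cocycle identity \eqref{eq:measurable:cocycle} gives $\sigma(\gamma_i^{-1}\gamma,s)^{-1} = \sigma(\gamma,s)^{-1}\sigma(\gamma_i^{-1},\gamma s)^{-1}$ for almost every $s$, so the integrand becomes
\[
\psi\bigl(\sigma(\gamma,s)^{-1}\sigma(\gamma_0^{-1},\gamma s)^{-1},\ldots,\sigma(\gamma,s)^{-1}\sigma(\gamma_\bullet^{-1},\gamma s)^{-1}\bigr),
\]
which by the $H$-invariance of $\psi$ (applied with $h = \sigma(\gamma,s)$) reduces to $\psi(\sigma(\gamma_0^{-1},\gamma s)^{-1},\ldots,\sigma(\gamma_\bullet^{-1},\gamma s)^{-1})$. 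A change of variables $s \mapsto \gamma s$, justified by the fact that $\Gamma$ preserves $\mu_\Omega$, yields back $\textup{C}^\bullet_b(\sigma)(\psi)(\gamma_0,\ldots,\gamma_\bullet)$, proving $\Gamma$-invariance. Once (iii) is in place, part (iv) is formal: a cochain map between invariant subcomplexes descends to cohomology with the stated formula.

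The only mildly delicate point is that the cocycle equation holds only off a set of measure zero, so strictly speaking one must fix countably many representatives (one per pair $(\gamma,\gamma_i)$, $\gamma,\gamma_i \in \Gamma$ countable) and discard a simultaneous null set before running the almost-everywhere manipulation under the integral; since $\Gamma$ is countable this causes no difficulty. I do not expect any genuine obstacle, and indeed the lemma is a straightforward adaptation of the standard pullback-by-cocycle construction of Monod--Shalom in \cite{MonShal}.
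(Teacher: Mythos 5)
Your proof is correct and follows essentially the same route as the paper: the cocycle identity converts $\sigma(\gamma_i^{-1}\gamma,s)^{-1}$ into $\sigma(\gamma,s)^{-1}\sigma(\gamma_i^{-1},\gamma s)^{-1}$, then $H$-invariance of $\psi$ absorbs the common factor $\sigma(\gamma,s)^{-1}$, and $\Gamma$-invariance of $\mu_\Omega$ lets one substitute $s \mapsto \gamma s$. Your closing remark about fixing a simultaneous conull set over the countably many pairs $(\gamma,\gamma_i)$ is a careful touch that the paper leaves implicit, but it does not change the argument.
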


\begin{proof}
Since $\mu_\Omega$ is a probability measure, it is clear that $\textup{C}^\bullet_b(\sigma)$ preserves boundedness. The fact that $\textup{C}^\bullet_b(\sigma)$ is a cochain map is an easy computation that we leave to the reader. 

To conclude the proof we need to show that $\textup{C}^\bullet_b(\sigma)$ restricts to invariant cochains. Let $\psi \in \textup{C}^\bullet_{cb}(H;\bbR)^H$ be a $H$-invariant cochain. For any $\gamma,\gamma_0,\ldots,\gamma_\bullet \in \Gamma$ it holds

\begin{align*}
\gamma \cdot \textup{C}^\bullet_b(\sigma)(\psi)(\gamma_0,\ldots,\gamma_\bullet)&=\textup{C}^\bullet_b(\sigma)(\psi)(\gamma^{-1} \gamma_0,\ldots,\gamma^{-1}\gamma_\bullet)= \\
&=\int_\Omega \psi(\sigma(\gamma_0^{-1}\gamma,s)^{-1},\ldots,\sigma(\gamma_\bullet^{-1}\gamma,s)^{-1})d\mu_\Omega(s)=\\
&=\int_\Omega \psi(\sigma(\gamma,s)^{-1}\sigma(\gamma_0^{-1},\gamma s)^{-1},\ldots,\sigma(\gamma,s)^{-1}\sigma(\gamma_\bullet^{-1},\gamma s)^{-1})d\mu_\Omega(s)=\\
&=\int_\Omega \psi(\sigma(\gamma_0^{-1},s)^{-1},\ldots,\sigma(\gamma_\bullet^{-1},s)^{-1})d\mu_\Omega(s)=\textup{C}^\bullet_b(\sigma)(\psi)(\gamma_0,\ldots,\gamma_\bullet) \ ,
\end{align*}
where we used Equation \eqref{eq:measurable:cocycle} to move from the second line to the third one and we exploited jointly the $H$-invariance of $\psi$ and the $\Gamma$-invariance of $\mu_\Omega$ to move from the third line to the fourth one. This concludes the proof.
\end{proof}

Thank to Lemma \ref{lem:pullback:no:boundary} we can give the following

\begin{deft}\label{def:pullback:measurable:cocycle}
Let $\Gamma \leq G$ be a lattice in a semisimple Lie group of non-compact type and let $(\Omega,\mu_\Omega)$ be a standard Borel probability $\Gamma$-space. Given a measurable cocycle $\sigma:\Gamma \times \Omega \rightarrow H$ with values into a locally compact group, we define the \emph{pullback induced by $\sigma$} as the map 
$$\textup{H}^\bullet_b(\sigma):\textup{H}^\bullet_{cb}(H;\bbR) \rightarrow \textup{H}^\bullet_b(\Gamma;\bbR) \ . $$ 
\end{deft}

It is quite natural to ask how the pullback map defined above varies along the cohomology class of a fixed measurable cocycle. We are going to show that it is actually constant. 

\begin{lem}\label{lem:pullback:cohomology:class}
Let $\Gamma \leq G$ be a lattice in a semisimple Lie group of non-compact type and let $(\Omega,\mu_\Omega)$ be a standard Borel probability $\Gamma$-space. Let $f:\Omega \rightarrow H$ be a measurable function with values into a locally compact group. Given a measurable cocycle $\sigma:\Gamma \times \Omega \rightarrow H$, it holds that
$$
\textup{H}^\bullet_b(\sigma^f)=\textup{H}^\bullet_b(\sigma) \ ,
$$
where $\sigma^f$ is the $f$-twisted cocycle associated to $\sigma$. 
\end{lem}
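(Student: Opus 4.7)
The plan is to exhibit an explicit chain homotopy between the cochain maps $\textup{C}^\bullet_b(\sigma)$ and $\textup{C}^\bullet_b(\sigma^f)$, constructed directly from the cocycle identity and the $H$-invariance of the cochains in $\textup{C}^\bullet_{cb}(H;\bbR)^H$.

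First I would unpack the definition. From $\sigma^f(\gamma,s)=f(\gamma s)^{-1}\sigma(\gamma,s)f(s)$ one obtains
\begin{equation*}
\sigma^f(\gamma^{-1},s)^{-1}=f(s)^{-1}\sigma(\gamma^{-1},s)^{-1}f(\gamma^{-1}s).
\end{equation*}
Substituting into the definition of $\textup{C}^\bullet_b(\sigma^f)(\psi)$ and exploiting the $H$-invariance of $\psi$ under the diagonal left action \eqref{eq:left:action} to cancel the common left factor $f(s)^{-1}$ in all $\bullet+1$ slots yields, writing $\tau(\gamma,s):=\sigma(\gamma^{-1},s)^{-1}$,
\begin{equation*}
\textup{C}^\bullet_b(\sigma^f)(\psi)(\gamma_0,\ldots,\gamma_\bullet)=\int_\Omega \psi\bigl(\tau(\gamma_0,s)f(\gamma_0^{-1}s),\ldots,\tau(\gamma_\bullet,s)f(\gamma_\bullet^{-1}s)\bigr)\,d\mu_\Omega(s).
\end{equation*}
The remaining right factors $f(\gamma_i^{-1}s)$ depend on the index $i$ and cannot be removed by a further application of $H$-invariance, so a chain homotopy is needed.

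Define $T\colon\textup{C}^\bullet_{cb}(H;\bbR)^H \to \textup{C}^{\bullet-1}_b(\Gamma;\bbR)^\Gamma$ by the prism-style formula
\begin{equation*}
T\psi(\gamma_0,\ldots,\gamma_{\bullet-1}):=\sum_{i=0}^{\bullet-1}(-1)^i\int_\Omega \psi\bigl(\tau(\gamma_0,s)f(\gamma_0^{-1}s),\ldots,\tau(\gamma_i,s)f(\gamma_i^{-1}s),\tau(\gamma_i,s),\tau(\gamma_{i+1},s),\ldots,\tau(\gamma_{\bullet-1},s)\bigr)\,d\mu_\Omega(s).
\end{equation*}
The $\Gamma$-invariance of $T\psi$ is checked exactly as in Lemma \ref{lem:pullback:no:boundary}: replacing each $\gamma_j$ by $\gamma^{-1}\gamma_j$ and applying the cocycle relation gives $\tau(\gamma^{-1}\gamma_j,s)=\sigma(\gamma,s)^{-1}\tau(\gamma_j,\gamma s)$, so every argument of $\psi$ in each summand acquires the same left prefactor $\sigma(\gamma,s)^{-1}$, which the $H$-invariance of $\psi$ absorbs. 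The substitution $t=\gamma s$, justified by the $\Gamma$-invariance of $\mu_\Omega$, then returns the expression to $T\psi(\gamma_0,\ldots,\gamma_{\bullet-1})$.

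The heart of the proof is the prism identity
\begin{equation*}
\delta T + T\delta = \textup{C}^\bullet_b(\sigma)-\textup{C}^\bullet_b(\sigma^f),
\end{equation*}
which I would verify by direct combinatorial expansion along the lines of the standard triangulation of $\Delta^{\bullet-1}\times[0,1]$: in the combined expansion of $\delta T\psi+T\delta\psi$, all intermediate faces produced by the coboundary omitting one of the two occurrences of $\tau(\gamma_i,s)$ cancel in pairs between consecutive summands, leaving only the two extreme contributions, namely the fully untwisted tuple (producing $\textup{C}^\bullet_b(\sigma)(\psi)$) and the fully twisted tuple (producing $-\textup{C}^\bullet_b(\sigma^f)(\psi)$). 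Low-degree checks ($\bullet=1,2$) fix the signs. Once the prism identity holds, any cocycle $\psi\in\textup{C}^\bullet_{cb}(H;\bbR)^H$ satisfies $\textup{C}^\bullet_b(\sigma^f)(\psi)-\textup{C}^\bullet_b(\sigma)(\psi)=-\delta(T\psi)$, a $\Gamma$-invariant coboundary, whence $\textup{H}^\bullet_b(\sigma^f)=\textup{H}^\bullet_b(\sigma)$. The main obstacle is the careful sign bookkeeping in this combinatorial cancellation, and the subtle use of the cocycle identity for $\sigma$ in verifying $\Gamma$-invariance of $T\psi$.
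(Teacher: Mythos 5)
Your proposal is correct and follows essentially the same route as the paper: you unpack $\sigma^f(\gamma^{-1},s)^{-1}=f(s)^{-1}\sigma(\gamma^{-1},s)^{-1}f(\gamma^{-1}s)$, absorb the common left factor by $H$-invariance, and then build exactly the prism-style homotopy $T=\sum_i(-1)^i s^\bullet_i(\sigma,f)$ that the paper (following Monod's Lemma 8.7.2) uses, with the same sign convention and the same verification of $\Gamma$-invariance via the cocycle identity. The only cosmetic difference is that the paper organizes the telescoping via auxiliary maps $\rho^\bullet_i(\sigma,f)$ interpolating between the untwisted and fully twisted cochains, whereas you phrase the cancellation directly as a combinatorial face-matching argument.
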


\begin{proof}
We are going to follow the line of \cite[Lemma 8.7.2]{monod:libro}. The main goal to prove the statement is to find a chain homotopy between $\textup{C}^\bullet_b(\sigma)$ and $\textup{C}^\bullet_b(\sigma^f)$ to show that they determine the same map in cohomology. 

Let $\psi \in \textup{C}_{cb}^\bullet(H;\bbR)^H$. We have
\begin{align}\label{eq:cochain:sigmaf}
\textup{C}^\bullet_{cb}(\sigma^f)(\psi)(\gamma_0,\ldots,\gamma_\bullet)&=\int_\Omega \psi(\sigma^f(\gamma_0^{-1},s)^{-1},\ldots,\sigma^f(\gamma_\bullet^{-1},s)^{-1})d\mu_\Omega=\\
&=\int_\Omega \psi(f(s)^{-1}\sigma(\gamma_0^{-1},s)^{-1}f(\gamma_0^{-1}s),\dots )d\mu_\Omega = \nonumber \\
&=\int_\Omega \psi(\sigma(\gamma_0^{-1},s)^{-1}f(\gamma_0^{-1}s),\ldots )d\mu_\Omega \nonumber \ ,
\end{align}
where we moved from the first line to the second one using the definition of $\sigma^f$ and we exploited the $H$-invariance of $\psi$ to move from the second line to the third one.  

For $0 \leq i \leq \bullet-1$ we now define the following map
$$
s^\bullet_i(\sigma,f):\textup{C}^\bullet_{cb}(H;\bbR) \rightarrow \textup{C}^{\bullet - 1}_b(\Gamma;\bbR) \ , \ \ s^\bullet_i(\sigma,f)(\psi)(\gamma_0,\ldots,\gamma_{\bullet-1}):=
$$
$$
=\int_\Omega \psi(\sigma(\gamma_0^{-1},s)^{-1}f(\gamma_0^{-1}s),\ldots,\sigma(\gamma_i^{-1},s)^{-1}f(\gamma_i^{-1}s),\sigma(\gamma_i^{-1},s)^{-1},\ldots,\sigma(\gamma_{\bullet-1}^{-1},s)^{-1})d\mu_\Omega(s) \ ,
$$
and we set $s^\bullet(\sigma,f):=\sum_{i=0}^{\bullet-1}(-1)^i s^\bullet_i(\sigma,f)$. By defining for $-1 \leq i \leq \bullet$ the map
$$
\rho^\bullet_i(\sigma,f):\textup{C}^\bullet_{cb}(H;\bbR) \rightarrow \textup{C}^{\bullet}_b(\Gamma;\bbR) \ , \ \ 
\rho^\bullet_i(\sigma,f)(\psi)(\gamma_0,\ldots,\gamma_\bullet):=
$$
$$
=\int_\Omega \psi(\sigma(\gamma_0^{-1},s)^{-1}f(\gamma_0^{-1}s),\ldots,\sigma(\gamma_i^{-1},s)^{-1}f(\gamma^{-1}_i s),\sigma(\gamma^{-1}_{i+1},s)^{-1},\ldots,\sigma(\gamma^{-1}_\bullet,s)^{-1})d\mu_\Omega(s) \ ,
$$
we can notice that $\rho^\bullet_{-1}(\sigma,f)=\textup{C}^\bullet_b(\sigma)$. Following the same computation of \cite[Lemma 8.7.2]{monod:libro}, we get that 
\begin{align*}
s^{\bullet+1}(\sigma,f)\delta^{\bullet}&=-\delta^{\bullet}s^{\bullet}(\sigma,f)+\sum_{i=0}^{\bullet}(\rho_{i-1}^{\bullet}(\sigma,f)-\rho^{\bullet}_i)=\\
&=-\delta^{\bullet}s^{\bullet}(\sigma,f)+\textup{C}^\bullet_b(\sigma)- \rho^\bullet_\bullet(\sigma,f) \ ,
\end{align*}
where $\delta^\bullet$ is the usual homogeneous coboundary operator. Since by Equation \eqref{eq:cochain:sigmaf} on the subcomplex of $H$-invariants cochains it holds
$$
\rho^\bullet_\bullet(\sigma,f)=\textup{C}^\bullet_b(\sigma^f) \ ,
$$
we get that 
$$
s^{\bullet+1}(\sigma,f)\delta^{\bullet}+\delta^{\bullet}s^{\bullet}(\sigma,f)=\textup{C}^\bullet_b(\sigma)- \textup{C}^\bullet_b(\sigma^f) \ ,
$$
and the claim follows. 
\end{proof}

We want now to relate Definition \ref{def:pullback:measurable:cocycle} with the approach followed by the author and Moraschini in \cite{moraschini:savini,moraschini:savini:2}. In the same setting of Definition \ref{def:pullback:measurable:cocycle}, consider a minimal parabolic subgroup $P \leq G$. Let $(Y,\nu)$ be any measure space such that the group $H$ acts on $Y$ by preserving the measure class of $\nu$. Given a boundary map $\phi:G/P \times \Omega \rightarrow Y$ associated to a measurable cocycle $\sigma:\Gamma \times \Omega \rightarrow H$, there exists a natural map defined at the level of cochains as
$$
\upC^\bullet(\Phi^\Omega):\calB^\infty(Y^{\bullet+1};\bbR)^H \rightarrow \upL^\infty((G/Q)^{\bullet+1};\bbR)^\Gamma \ ,
$$
$$
\upC^\bullet(\Phi^\Omega)(\psi)(\xi_0,\ldots,\xi_\bullet):=\int_{\Omega} \psi(\phi(\xi_0,s),\ldots,\phi(\xi_\bullet,s))d\mu_\Omega(s) \ ,
$$
where $\textup{C}^\bullet(\Phi^\Omega)(\psi)$ has to be intended as an $\textup{L}^\infty$-equivalence class. As shown by the author and Moraschini \cite{moraschini:savini,moraschini:savini:2}, the above map is a chain map which does not increase the norm and it induces a well-defined map in cohomology 
$$
\upH^\bullet(\Phi^\Omega):\upH^\bullet(\calB(Y^{\bullet+1};\bbR)^H) \rightarrow \upH^\bullet_{cb}(G;\bbR) \ , \ \upH^\bullet(\Phi^\Omega)([\psi]):=[\upC^\bullet(\Phi^\Omega)(\psi)] \ . 
$$
We are going to call the map $\textup{H}^\bullet(\Phi^\Omega)$ \emph{pullback induced by the boundary map} $\phi$.  We have the following result which should be interpreted as an extension of \cite[Corollary 2.7]{burger:articolo}.

\begin{lem}\label{lem:pullback:cocycle:boundary}
Let $\Gamma \leq G$ be a lattice in a semisimple Lie group of non-compact type. Consider a minimal parabolic subgroup $P \leq G$ and a standard Borel probability $\Gamma$-space $(\Omega,\mu_\Omega)$. Let $(Y,\nu)$ be a measure space on which a locally compact group $H$ acts by preserving the measure class of $\nu$. Suppose that a measurable cocycle $\sigma:\Gamma \times X \rightarrow H$ admits a boundary map $\phi:G/P \times \Omega \rightarrow Y$. Given $\psi \in \calB^\infty(Y^{\bullet+1};\bbR)^H$, then 
$$
\textup{C}^\bullet(\Phi^\Omega)(\psi) \in \textup{L}^\infty((G/P)^{\bullet+1};\mathbb{R})^\Gamma \ ,
$$  
is a natural representative of the class $\textup{H}^\bullet_b(\sigma)([\psi]) \in \textup{H}^\bullet_{cb}(\Gamma;\bbR)$. 
\end{lem}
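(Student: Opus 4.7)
The plan is to combine Burger--Monod's functorial machinery with the fundamental lemma of homological algebra for strong resolutions, following the line of \cite[Corollary 2.7]{burger:articolo}. The key idea is that both constructions of the pullback extend the same augmentation $\bbR\to\bbR$ in degree $-1$, and are therefore forced to agree in cohomology.

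First I would recall two chain-level incarnations of the natural identifications. By \cite[Proposition 2.1]{burger:articolo} the complex $(\calB^\infty(Y^{\bullet+1};\bbR),\delta^\bullet)$ is a strong resolution of the trivial Banach $H$-module $\bbR$; since $(\textup{C}^\bullet_{cb}(H;\bbR),\delta^\bullet)$ is in addition a strong resolution by relatively injective Banach $H$-modules, the fundamental lemma of Burger--Monod yields an $H$-equivariant chain map $\alpha^\bullet:\calB^\infty(Y^{\bullet+1};\bbR)\to \textup{C}^\bullet_{cb}(H;\bbR)$ extending $\textup{id}_\bbR$ and unique up to $H$-equivariant chain homotopy; on $H$-invariants it realizes $\mathfrak{c}^\bullet$ at the level of cochains. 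An entirely analogous $\beta^\bullet:\textup{L}^\infty((G/P)^{\bullet+1};\bbR)\to\textup{C}^\bullet_b(\Gamma;\bbR)$ on the $\Gamma$-side realizes, on invariants, the canonical isometric isomorphism computing $\textup{H}^\bullet_b(\Gamma;\bbR)$.

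Second, I would compare the two composite cochain maps $\textup{C}^\bullet_b(\sigma)\circ\alpha^\bullet$ and $\beta^\bullet\circ \textup{C}^\bullet(\Phi^\Omega)$, both viewed as maps $\calB^\infty(Y^{\bullet+1};\bbR)^H\to \textup{C}^\bullet_b(\Gamma;\bbR)^\Gamma$. Each preserves constants -- this uses that $\mu_\Omega$ is a probability measure and that $\phi$ is $\sigma$-equivariant -- so both extend the same augmentation $\bbR\to\bbR$ in degree $-1$. The uniqueness clause of the fundamental lemma, applied to the relatively injective target resolution, then forces the two composites to be chain homotopic, and hence they induce the same map in the bounded cohomology of $\Gamma$. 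Unwinding the identifications, this is exactly the statement that $\textup{C}^\bullet(\Phi^\Omega)(\psi)$ represents $\textup{H}^\bullet_b(\sigma)([\psi])$ for every $\psi\in\calB^\infty(Y^{\bullet+1};\bbR)^H$.

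The hard part will be a careful application of the uniqueness clause across two different groups, since the source resolution is over $H$ whereas the target lives over $\Gamma$; the bridge must be provided by the $\sigma$-equivariance of $\phi$ together with integration against $\mu_\Omega$. Concretely, following the pattern of the proof of Lemma \ref{lem:pullback:cohomology:class} (which in turn adapts \cite[Lemma 8.7.2]{monod:libro}), I expect the required chain homotopy to be writable explicitly by ``mixing'' the pullback via $\phi$ with the bar-type pullback via $\sigma$, with integration over $\Omega$ intertwining the $H$- and $\Gamma$-actions.
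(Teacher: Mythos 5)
Your high-level intuition — that both constructions extend the same augmentation and hence should agree — is morally the right one, and the paper is indeed establishing exactly the commutativity $\textup{H}^\bullet(\Phi^\Omega)=\textup{H}^\bullet_b(\sigma)\circ\mathfrak{c}^\bullet$. However, the way you invoke the uniqueness clause of the fundamental lemma has a genuine gap, which you flag yourself but do not actually close.

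The fundamental lemma of Burger--Monod compares $G$-equivariant chain maps between $G$-resolutions for a \emph{fixed} group $G$: a strong source and a relatively injective target over the same group yield a morphism extending the identity, unique up to $G$-equivariant homotopy. Here your two composites go from $\calB^\infty(Y^{\bullet+1};\bbR)^H$ to $\upC^\bullet_b(\Gamma;\bbR)^\Gamma$; once you restrict to invariants the equivariance is gone, and before restricting there is no common group over which both are modules. Crucially, $\sigma$ is a measurable cocycle and not a homomorphism, so there is no natural $\Gamma$-module structure on $\calB^\infty(Y^{\bullet+1};\bbR)$ through which one could view the whole diagram as living over $\Gamma$ and then apply uniqueness. (The usual workaround is to introduce the auxiliary $\Gamma$-module $\upL^\infty(\Omega,\calB^\infty(Y^{\bullet+1};\bbR))$ with the twisted $\sigma$-action, compare both maps through it, and only then invoke relative injectivity of the target; you do not set this up.) Your final paragraph concedes the problem and gestures toward an explicit ``mixing'' homotopy modeled on Lemma \ref{lem:pullback:cohomology:class}, but that homotopy is never written down, and it is precisely the nontrivial content of the claim. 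The paper instead sidesteps all of this by directly applying \cite[Proposition 1.2]{burger:articolo}, which already packages the commutative diagram
\[
\textup{H}^\bullet(\Phi^\Omega)=\textup{H}^\bullet_b(\sigma)\circ\mathfrak{c}^\bullet ,
\]
so the lemma follows in one line; the heavy lifting your argument tries to redo from scratch is exactly what that citation absorbs.
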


\begin{proof}
It is sufficient to apply \cite[Proposition 1.2]{burger:articolo} to get the following commutative diagram
$$
\xymatrix{
\textup{H}^\bullet(\calB^\infty(Y^{\bullet+1};\bbR)^H) \ar[dd]^{\mathfrak{c}^\bullet} \ar[rrr]^{\textup{H}^\bullet(\Phi^\Omega)} &&& \textup{H}^\bullet_b(\Gamma;\bbR) \\
\\
\textup{H}^\bullet_{cb}(H;\bbR) \ar[uurrr]^{\textup{H}^\bullet_b(\sigma)} &&& \ ,
}
$$
and the statement follows. 
\end{proof}

We are going to use the pullback maps introduced so far to define properly the Toledo invariant of a measurable cocycle of a surface group.

\subsection{Lie groups of Hermitian type}\label{sec:hermitian:groups}

In this section we are going to recall the main definitions and results about Lie groups of Hermitian type. We are going to remind the notion of Shilov boundary for a Hermitian symmetric space and we are going to define a suitable cocycle on it, called Bergmann cocycle, which will enable us to define the notion of maximality for measurable cocycles of surface groups. For a more detailed discussion about these notions, we refer mainly to the work of Burger, Iozzi and Wienhard \cite{BIW07,BIW09,BIW1}. 

\begin{deft}\label{def:hermitian:symmetric:space}
Let $\calX$ be a Riemannian symmetric space and denote by $G=\textup{Isom}(\calX)^\circ$ the connected component of the identity of the isometry group associated to $\calX$. We say that $\calX$ is \emph{Hermitian} if there exists a $G$-invariant complex structure $\calJ$ on $\calX$. Given a semisimple real algebraic group $\mathbf{G}$, we say that $G=\mathbf{G}(\bbR)^\circ$ is \emph{of Hermitian type} if its symmetric space $\calX$ is Hermitian. 
\end{deft}

Among all the possible ones, a family of examples of particular interest in this paper will be the one of Hermitian symmetric spaces of tube type. We say that a Hermitian symmetric space $\calX$ is \emph{of tube type} if it is biholomorphic to a complex subset of the form $V+i\Omega$, where $V$ is a real vector space and $\Omega \subset V$ is a proper convex cone. The prototypical example is given by the upper half plane model for the complex hyperbolic line $\bbH^1_{\bbC}$. The latter is the symmetric space associated to the group $\pu(1,1)$, and more generally is of tube type the symmetric space associated to $\pu(p,p)$ when $p \geq 2$. 

A Hermitian symmetric space $\calX$ can be bihomolorphically realized as bounded convex domain $\calD_{\calX}$ in $\bbC^n$. For such a realization, the group $G=\textup{Isom}(\calX)^\circ$ acts via biholomorphisms and its action can be extended in a continuous way to the boundary $\partial \calD_{\calX}$. Unfortunately the latter is not a homogeneous $G$-space, but it admits a unique closed $G$-orbit. The latter will be identified with the Shilov boundary.

More precisely we give first the following

\begin{deft}\label{def:shilov:boundary}
Let $\calD \subset \bbC^n$ be a bounded domain. The \emph{Shilov boundary} $\check{\calS}_{\calD}$ of $\calD$ is the unique closed subset of $\partial \calD$ such that, given a function $f$ continuous on $\overline{\calD}$ and holomorphic on $\calD$, then 
$$
\max_{\overline{D}}|f|=\max_{\check{\calS}_{\calD}} |f| \ . 
$$
Given a Hermitian symmetric space $\calX$, we denote by $\check{\calS}_{\calX}$ the Shilov boundary associated to the bounded realization of $\calX$ and we call it \emph{the Shilov boundary of $\calX$}.
\end{deft}

As already anticipated the Shilov boundary associated to a Hermitian symmetric space $\calX$ is a homogeneous $G$-space. Indeed if we denote by $\mathbf{G}$ the algebraic group associated to the complexified Lie algebra of $G=\textup{Isom}(\calX)^\circ$, then there exists a maximal parabolic subgroup $\mathbf{Q} \subset \mathbf{G}$ such that $\check{\calS}_{\calX}$ can be identified with $(\mathbf{G}/\mathbf{Q})(\bbR)$. Such an identification determines naturally a structure of algebraic variety on $\check{\calS}_{\calX}$. 

Although $\check{\calS}_{\calX}$ is an amenable $G$-space only when the rank of $G$ is equal to 1, we can use the resolution of bounded measurable functions $(\calB^\infty((\check{\calS}_{\calX})^{\bullet+1};\bbR),\delta^\bullet)$ to obtain in a canonical way a class in the continuous bounded cohomology of $G$, as noticed in Section \ref{sec:burger:monod}. We are going to focus our attention to the particular case when the degree is equal to $2$. In order to describe more accurately the second bounded cohomology group of $G$, recall that if $\calX$ is Hermitian, then there exists a $G$-invariant complex structure $\calJ$ on it. If we denote by $g$ the $G$-invariant Riemannian metric on $\calX$, we can define the \emph{K\"ahler form} at $a \in \calX$ as
$$
(\omega_{\calX})_a(X,Y):=g_a(X,\calJ_a Y) \ ,
$$
for any $X,Y \in T_a \calX$. Being $G$-invariant, the form $\omega_{\calX}$ is automatically closed by Cartan's Lemma \cite[VII.4]{Hel01}. Define now
\begin{equation}\label{eq:cocycle:symmetric:space}
\beta_{\calX}: (\calX)^{(3)} \rightarrow \bbR, \ \ \beta_{\calX}(a_1,a_2,a_3):= \frac{1}{2\pi} \int_{\Delta(a_1,a_2,a_3)} \omega_{\calX} \ ,
\end{equation} 
where $\Delta(a_1,a_2,a_3)$ is any triangle with geodesic sides determined by $a_1,a_2,a_3 \in \calX$. Since $\omega_{\calX}$ is closed, by Stokes' Theorem the function $\beta_{\calX}$ is an alternating $G$-invariant cocycle on $\calX$. Remarkably the cocycle $\beta_{\calX}$ can be extended to a strict measurable $G$-invariant cocycle on the Shilov boundary $\check{\calS}_{\calX}$ \cite[Corollary 3.8]{BIW07} and its absolute value is bounded by $\frac{\rk(\calX)}{2}$. We are going to denote such an extension with $\beta_{\calX}$ with an abuse of notation. 
As previously said in Section \ref{sec:burger:monod} the cocycle $\beta_{\calX} \in \calB^\infty((\check{\calS}_{\calX})^{(3)};\bbR)^G$ determines canonically a class in $\upH^2_{cb}(G;\bbR)$. 

\begin{deft}
We call \emph{Bergmann cocycle} the measurable extension $\beta_{\calX}: \check{\calS}^{(3)} \rightarrow \bbR$ to the Shilov boundary of the cocycle defined by Equation \eqref{eq:cocycle:symmetric:space}. 

We denote by $\kappa^b_G \in \upH^2_{cb}(G;\bbR)$ the class determined by $\beta_{\calX}$ and we call it \emph{bounded K\"ahler class}. 
\end{deft}

Recall that two points $\xi,\eta \in \check{\calS}_{\calX}$ are \emph{transverse} if they lie in the unique open $G$-orbit in $(\check{\calS}_{\calX})^2$. A triple of points $(\xi,\eta,\omega)$ will be said \emph{maximal} if it satisfies $|\beta_{\calX}(\xi,\eta,\omega)|=\frac{\rk(\calX)}{2}$. We conclude the section by recalling some properties of the Bergmann cocycle when $\calX$ is a Hermitian symmetric space of tube type. As stated in \cite[Lemma 5.5]{BIW1}, if $\calX$ is of tube type then
\begin{enumerate}
\item the cocycle $\beta_{\calX}$ takes values in the discrete set $$\{-\frac{\rk(\calX)}{2}, - \frac{\rk(\calX)}{2}+1 , \ldots , \frac{\rk(\calX)}{2}-1, \frac{\rk(\calX)}{2} \} \ ; $$
\item if the triple $(\xi,\eta,\omega)$ is maximal, then $\xi,\eta,\omega$ are pairwise transverse;
\item we can decompose 
$$
(\check{\calS}_{\calX})^{(3)}= \sqcup_{i=0}^{\rk(\calX)} \calO_{-\rk(\calX)+2i} \ ,
$$
where $\calO_{-\rk(\calX)+2i}$ is the open subset of $(\check{\calS}_{\calX})^{(3)}$ where $\beta_{\calX}$ is identically equal to $-\frac{\rk(\calX)}{2}+i$;
\item given $\xi, (\xi_n)_{n \in \bbN}, (\xi'_n)_{n \in \bbN}$ where $\xi,\xi_n,\xi'_n \in \check{\calS}_{\calX}$, if $\lim_{n \to \infty} \xi_n =\xi$ and the triple $(\xi,\xi_n,\xi_n')$ is maximal for every $n \in \bbN$, then $\lim_{n \to \infty} \xi'_n=\xi$. 
\end{enumerate}

\section{Maximal measurable cocycles of surface groups}\label{sec:maximal:cocycles}

In this section we are going to introduce the definition and the main properties of the \emph{Toledo invariant} of a measurable cocycle associated to a surface group. Such invariant will have bounded absolute value and we will see that in case of maximality one can get important information about the target group of the measurable cocycle. 

\subsection{The Toledo invariant of a measurable cocycle}\label{sec:toledo:invariant}
Let $L$ be a finite connected covering of the group $\pu(1,1)$ and consider a torsion-free lattice $\Gamma \leq L$. Let $(\Omega,\mu_\Omega)$ be a standard Borel probability $\Gamma$-space. Denote by $G=\text{Isom}^\circ(\calX)$ the connected component of the identity of the isometry group of an irreducible Hermitian symmetric space $\calX$.
Let $\mathbf{G}$ be the connected Lie group associated to the complexified Lie algebra of $G$, so that $G=\mathbf{G}(\bbR)^\circ$. Let $\sigma:\Gamma \times \Omega \rightarrow G$ be a measurable cocycle. 

Since we are exactly in the situation described by Definition \ref{def:pullback:measurable:cocycle}, we have a pullback map in cohomology 
$$
\upH^\bullet_b(\sigma): \upH^\bullet_{cb}(G;\bbR) \rightarrow \upH^\bullet_b(\Gamma;\bbR) \ .
$$
In particular we are allowed to consider the pullback of the bounded K\"ahler class $\kappa^b_G$. Being $\Gamma$ a lattice of $L$, we have a well-defined \emph{transfer map}. Since we are going to use it later, we are going to recall its definition using the resolution of essentially bounded functions. The transfer map is given at the level of cochains by
$$
\hat{\upT}^\bullet_b:\upL^\infty((\bbS^1)^{\bullet+1};\bbR)^\Gamma \rightarrow \upL^\infty((\bbS^1)^{\bullet+1};\bbR)^L \ ,
$$
$$
\hat{\upT}^\bullet_b(\psi)(\xi_0,\ldots,\xi_\bullet):=\int_{\Gamma \backslash L} \psi(\overline{g}\xi_0,\ldots,\overline{g}\xi_\bullet)d\mu_{\Gamma \backslash L}(\overline{g}) \ ,
$$
where $\overline{g}$ denotes the equivalence class of $g$ in $\Gamma \backslash L$ and $\mu_{\Gamma \backslash L}$ is the normalized $L$-invariant measure on the quotient. Being a chain map, $\hat \upT^\bullet_b$ induces a well-defined map in cohomology called \emph{transfer map}
$$
\upT^\bullet_b:\upH^\bullet_b(\Gamma;\bbR) \rightarrow \upH^\bullet_{cb}(L;\bbR), \hspace{5pt} \upT^\bullet_b([\psi]):=[\hat \upT^\bullet_b(\psi)] \ .
$$

It is worth recalling that the bounded K\"ahler class $\kappa^b_L$ is a generator of the group $\upH^2_{cb}(L;\bbR)$ which is indeed a one dimensional vector space. In this particular setting, we are allowed to give the following 

\begin{deft}\label{def:toledo:inv}
Let $\Gamma \leq L$ be a torsion-free lattice and $(\Omega,\mu_\Omega)$ a standard Borel probability $\Gamma$-space. Consider a measurable cocycle $\sigma:\Gamma \times \Omega \rightarrow G$. The \emph{Toledo invariant} $\textup{t}_b(\sigma)$ associated to $\sigma$ is defined as
\begin{equation}\label{eq:toledo:cocycle}
\upT^2_b \circ \upH^2_b(\sigma)(\kappa^b_G)=\textup{t}_b(\sigma)\kappa^b_L \ .
\end{equation}
\end{deft}

The first natural question that one could ask is how the Toledo number varies along the cohomology class of $G$. We are going to prove that it is constant along that class and it has bounded absolute value. 

\begin{lem}\label{lem:toledo:invariant}
Let $\Gamma \leq L$ be a torsion-free lattice and $(\Omega,\mu_\Omega)$ a standard Borel probability $\Gamma$-space. Consider a measurable cocycle $\sigma:\Gamma \times \Omega \rightarrow G$. Then the Toledo invariant depends only on the $G$-cohomology class of $\sigma$ and it holds
$$
|\textup{t}_b(\sigma)| \leq \rk(\calX) \ .
$$
\end{lem}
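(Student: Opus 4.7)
The plan is to prove the two assertions separately, both by exploiting functoriality and norm–monotonicity of the chain-level maps introduced earlier in the paper.

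First I would address constancy along $G$-cohomology classes. Given two cohomologous cocycles with $\sigma_2=\sigma_1^f$ for some measurable $f\colon\Omega\to G$, Lemma \ref{lem:pullback:cohomology:class} immediately yields $\upH^\bullet_b(\sigma_1)=\upH^\bullet_b(\sigma_2)$; composing with the transfer $\upT^2_b$ and evaluating on $\kappa^b_G$ shows that $\textup{t}_b(\sigma_1)=\textup{t}_b(\sigma_2)$. This part is essentially a bookkeeping application of the lemma already proved.

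Next I would prove the bound $|\textup{t}_b(\sigma)|\leq\rk(\calX)$ by tracking seminorms through the composition $\upT^2_b\circ\upH^2_b(\sigma)$. The key observation is that both chain-level maps are defined by integration against probability measures: the pullback $\textup{C}^\bullet_b(\sigma)$ averages over $(\Omega,\mu_\Omega)$, and the transfer $\hat\upT^\bullet_b$ averages over $\Gamma\backslash L$ endowed with the normalized $L$-invariant measure. Hence neither increases the supremum norm on cochains, and, passing to cohomology, both induced maps are seminorm non-increasing.

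To close the estimate I would bound $\|\kappa^b_G\|$ through its representative: by the discussion at the end of Section \ref{sec:hermitian:groups}, $\kappa^b_G$ is represented by the Bergmann cocycle $\beta_{\calX}\in\calB^\infty((\check{\calS}_{\calX})^{(3)};\bbR)^G$, whose absolute value is at most $\rk(\calX)/2$, so $\|\kappa^b_G\|\leq\rk(\calX)/2$. Since $L$ is a finite connected covering of $\pu(1,1)$ whose symmetric space $\bbH^1_{\bbC}$ has rank one, the same reasoning gives $\|\kappa^b_L\|\leq 1/2$, and it is classical (compare \cite{BIW07,BIW1}) that this is an equality. Combining the inequalities,
\[
|\textup{t}_b(\sigma)|\cdot\|\kappa^b_L\|\;=\;\bigl\|\upT^2_b\circ\upH^2_b(\sigma)(\kappa^b_G)\bigr\|\;\leq\;\|\kappa^b_G\|\;\leq\;\frac{\rk(\calX)}{2},
\]
whence $|\textup{t}_b(\sigma)|\leq\rk(\calX)$.

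The only non-cosmetic subtlety is the equality $\|\kappa^b_L\|=1/2$: only an upper bound on $\|\kappa^b_L\|$ would not suffice to extract a bound on the scalar $\textup{t}_b(\sigma)$, since $\kappa^b_L$ sits in the denominator of the defining equation. The upper estimate comes from the Bergmann cocycle exactly as for $G$, and the matching lower bound is the standard computation for $\pu(1,1)$, which ascends to any finite connected covering $L$ because the relevant bounded cohomology is insensitive to such coverings. Once this is cited, everything else reduces to the contraction property of integration against a probability measure.
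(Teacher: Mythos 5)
Your proof is correct and follows the same route as the paper's: cite Lemma \ref{lem:pullback:cohomology:class} for cohomology-class invariance, then track seminorms through the norm-non-increasing maps $\upT^2_b$ and $\upH^2_b(\sigma)$, using $\|\kappa^b_G\|\leq\rk(\calX)/2$ and the exact value $\|\kappa^b_L\|=1/2$. You are also right to flag the subtlety that $\|\kappa^b_L\|=1/2$ must hold as an equality rather than an upper bound in order to divide through; the paper uses this implicitly (and in fact writes an equals sign where an inequality should appear), so your caveat is a useful clarification rather than a deviation.
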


\begin{proof}
The fact that the Toledo invariant is constant on the $G$-cohomology class is a direct consequence of Lemma \ref{lem:pullback:cohomology:class}. 

To prove the boundedness of the absolute value, recall that both $\textup{T}^2_b$ and $\textup{H}^2_b(\sigma)$ are norm non-increasing maps. Thus it follows that 
$$
\lVert \textup{t}_b(\sigma) \kappa^b_L \rVert_\infty = \lVert \upT^2_b \circ \upH^2_b(\sigma)(\kappa^b_G) \rVert_\infty =\frac{\rk (\calX)}{2} \ , 
$$
and since it holds $\lVert \kappa^b_L \rVert_\infty=\frac{1}{2}$, the claim follows.
\end{proof}

Suppose now that $\sigma:\Gamma \times \Omega \rightarrow G$ admits a boundary map $\phi:\mathbb{S}^1 \times \Omega \rightarrow \check{\calS}_{\calX}$, where $\check{\calS}_{\calX}$ is the Shilov boundary of the Hermitian symmetric space $\calX$. From Section \ref{sec:no:boundary:map} we know that the Toledo invariant can be computed alternatively with the use of the boundary map $\phi$. Indeed if we consider the pullback along $\phi$, namely
$$
\textup{H}^2(\Phi^\Omega):\textup{H}^2(\calB^\infty((\check{\calS}_{\calX})^3;\bbR)^G) \rightarrow \textup{H}^2_b(\Gamma;\bbR) \ ,
$$
then we know by Lemma \ref{lem:pullback:cocycle:boundary} that it holds
$$
\textup{H}^2_b(\sigma)(\kappa^b_G)=\textup{H}^2(\Phi^\Omega)(\left[ \beta_{\calX} \right]) \ ,
$$
where we used the fact that the Bergmann cocycle $\beta_{\calX}$ is a canonical representative of the bounded K\"ahler class
$\kappa^b_G$. As a consequence, Equation \eqref{eq:toledo:cocycle} can be alternatively rewritten as follows
\begin{equation}\label{eq:toledo}
\textup{T}^2_b \circ \textup{H}^2(\Phi^\Omega)(\left[ \beta_{\calX} \right])=\textup{t}_b(\sigma)\kappa^b_L \ .
\end{equation}

Recall now that $\kappa^b_L$ can be representend by $\beta_{\bbS^1}$ (which is nothing else that the \emph{orientation cocycle} up to a factor equal to $\frac{1}{2}$). Since the $\Gamma$-action on the circle is doubly ergodic and the cocycles that we are considerng are alternating, Equation \eqref{eq:toledo} holds actually at the level of bounded measurable cochains, that is 
\begin{align}\label{eq:formula}
\int_{\Gamma \backslash L} \int_\Omega & \beta_{\calX}(\phi(\overline{g}\xi,s),\phi(\overline{g}\eta,s),\phi(\overline{g}\omega,s))d\mu_\Omega(s)d\mu_{\Gamma \backslash L}(\overline{g}) =\\
=& \text{t}_b(\sigma)\beta_{\bbS^1}(\xi,\eta,\omega) \nonumber \  ,
\end{align}
and the equation holds for \emph{every triple of pairwise distinct points} $\xi,\eta,\omega \in \bbS^1$, as a consequence of either Burger and Iozzi \cite{BIcartan} or Pozzetti \cite{Pozzetti}, for instance. Notice that Equation \eqref{eq:formula} is simply a suitable adaptation of \cite[Corollary 4.4]{BIW1} to the context of measurable cocycles. Indeed, suppose that $\sigma$ is a measurable cocycle associated to a Zariski dense representation $\rho:\Gamma \rightarrow G$. Then the boundary map of $\sigma$ will be nothing else that the boundary map associated to $\rho$ (which exists by \cite[Proposition 7.2]{BI04}) and the formula given by Equation \eqref{eq:formula} boils down to \cite[Corollary 4.4]{BIW1}.

It is immediate to verify that the Toledo invariant is a \emph{multiplicative constant} in the sense of \cite[Definition 3.16]{moraschini:savini:2}. Indeed following the notation of that paper, the setting required by \cite[Definition 3.16]{moraschini:savini:2} is satisfied and one has
$$
\textup{t}_b(\sigma)=\lambda_{\beta_{\calX},\beta_{\bbS^1}}(\sigma) \ ,
$$
where $\lambda_{\beta_{\calX},\beta_{\bbS^1}}(\sigma)$ denotes the multiplicative constant associated the measurable cocycle $\sigma$ for the Bergmann cocycles $\beta_{\calX},\beta_{\bbS^1}$.

\begin{oss}\label{oss:alternative:definition}
We could have defined the Toledo invariant in a different way. Let $\Gamma \leq L$ be a torsion-free lattice and let $(\Omega,\mu_\Omega)$ be a standard Borel probability $\Gamma$-space. Denote by $\Sigma$ the finite-area surface obtained as the quotient of $\bbH^2_{\bbR}$ by $\Gamma$, that is $\Sigma=\Gamma \backslash \bbH^2_{\bbR}$. If $\Gamma$ is \emph{uniform} we know that $\Sigma$ is closed, whereas when $\Gamma$ is \emph{non-uniform} then the surface $\Sigma$ has finitely many cusps. In the latter case we are going to denote by $S$ a \emph{compact core} of $\Sigma$, otherwise we set $S=\Sigma$. 

Following \cite[Section 3.4]{moraschini:savini} we can define the following composition of functions
\begin{equation}\label{eq:j:composition}
\upJ^\bullet_{S, \partial S}: \upH^\bullet_b(\Gamma;\bbR) \rightarrow \upH^\bullet_b(\Sigma;\bbR) \rightarrow \upH^\bullet_b(\Sigma,\Sigma \setminus S) \rightarrow \upH^\bullet_b(S,\partial S) \ ,
\end{equation}
where the first map is the isomorphism given by the Gromov's Mapping Theorem \cite{Grom82,Ivanov,FM:grom}, the second map is obtained by the long exact sequence in bounded cohomology \cite{BBFIPP} and the last map is induced by the homotopy equivalence $(\Sigma,\Sigma \setminus S) \simeq (S, \partial S)$. 

Given a measurable cocycle $\sigma:\Gamma \times \Omega \rightarrow G$, we could have defined the \emph{Toledo number} of the cocycle $\sigma$ as
$$
\upT_b(\sigma):= \langle \textup{comp}^2_{S, \partial S} \circ \upJ^2_{S, \partial S} \circ \upH^2_b(\sigma)(\kappa^b_G),[S,\partial S] \rangle \ ,
$$
where we denoted by 
$$\textup{comp}^2_{S, \partial S}:\textup{H}^2_b(S,\partial S) \rightarrow \textup{H}^2(S,\partial S) \ ,$$
the \emph{comparison map} associated to the pair $(S,\partial S)$. 

To compare the two different definitions of the Toledo invariant, one can follows the same strategy of the proofs of either \cite[Proposition 1.2, Proposition 1.6]{moraschini:savini} or \cite[Proposition 5.5]{moraschini:savini:2}. It is worth mentioning that the same idea contained in the proofs of those propositions can be actually adapted also to the case when $\sigma$ does not admits a boundary. In this way it is possible to show that
\begin{equation}\label{eq:alternative:toledo}
\textup{t}_b(\sigma)=\frac{\textup{T}_b(\sigma)}{|\chi(\Sigma)|} \ ,
\end{equation}
where $\chi(\Sigma)$ is the Euler characteristic of the surface $\Sigma$. Notice that Equation \eqref{eq:alternative:toledo} is analogous to the one obtained by Burger, Iozzi and Wienhard \cite[Theorem 3.3]{BIW1}. In particular $\textup{T}_b(\sigma)$ is an invariant of the $G$-cohomology class of $\sigma$ and it holds the following estimate
$$
|\textup{T}_b(\sigma)| \leq \rk(\calX) |\chi(\Sigma)| \ . 
$$
\end{oss}

\subsection{Maximal measurable cocycles of surface groups}\label{sec:maximal:cocycle:thm}

In this section we are going to introduce the notion of maximality. Maximal measurable cocycles represent the first example of tight cocycles and this has important consequences on their algebraic hull. 

We start by giving the definition of maximality. 

\begin{deft}\label{def:maximal:cocycle}
Let $\Gamma \leq L$ be a torsion-free lattice and let $(\Omega,\mu_\Omega)$ be a standard Borel probability $\Gamma$-space. Consider a measurable cocycle $\sigma:\Gamma \times \Omega \rightarrow G$. We say that $\sigma$ is \emph{maximal} if it holds $\text{t}_b(\sigma)=\rk(\calX)$. 
\end{deft}

In order to show that maximal cocycles are tight, we need first to introduce the notion of tightness for measurable cocycles of surface groups. Inspired by the notion for representations studied by Burger, Iozzi and Wienhard \cite{BIW09}, we can give the following 

\begin{deft}\label{def:tight:cocycle}
Let $\Gamma \leq L$ be a torsion-free lattice and $(\Omega,\mu_\Omega)$ a standard Borel probability $\Gamma$-space. Consider a measurable cocycle $\sigma:\Gamma \times \Omega \rightarrow G$. We say that $\sigma$ is \emph{tight} if it holds
$$
\lVert \upH^2_b(\sigma)(\kappa^b_G) \rVert_\infty=\frac{\rk(\calX)}{2} \ .
$$
\end{deft}

\begin{oss}
It is worth mentioning that in the particular case when $\sigma$ admits a boundary map $\phi:\bbS^1 \times \Omega \rightarrow \check{\calS}_{\calX}$, then the notion of tightness can be alternatively rewritten as follows
$$
\lVert \textup{H}^2(\Phi^\Omega)([\beta_{\calX}]) \rVert_\infty=\frac{\rk(\calX)}{2} \ .
$$
\end{oss}

Clearly the definition above mimic the one given for representations. Indeed it is immediate to check that if the cocycle is cohomologous to the one induced by a representation, Definition \ref{def:tight:cocycle} boils down to the standard one. Another important aspect is that the tightness property is invariant along the $G$-cohomology class of a given cocycle by Lemma \ref{lem:pullback:cohomology:class}. Notice that we could have introduced the notion of tightness in a much more general setting, but this would be not so useful for our purposes. 

The deep study of tight representations done by Burger, Iozzi and Wienhard \cite{BIW09} enables us to state the following theorem which characterizes the algebraic hull of a tight cocycle and which is a direct consequence of \cite[Theorem 3]{BIW09}, where a full characterization of tight subgroups is given. 

\begin{teor}\label{teor:alg:hull:tight}
Let $\Gamma$ be a torsion-free lattice of a finite connected covering $L$ of $\pu(1,1)$ and let $(\Omega,\mu_\Omega)$ be a standard Borel probability $\Gamma$-space. Consider $\mathbf{G}$ a semisimple algebraic $\bbR$-group such that $G=\mathbf{G}(\bbR)^\circ$ is a Lie group of Hermitian type. Given a measurable cocycle $\sigma:\Gamma \times \Omega \rightarrow G$, denote by $\mathbf{H}$ the algebraic hull of $\sigma$ in $\mathbf{G}$ and set $H=\mathbf{H}(\bbR)^\circ$. If $\sigma$ is tight then 
\begin{enumerate}
	\item $\mathbf{H}$ is a reductive group;
	\item the centralizer $Z_G(H)$ is compact; 
	\item if $\calY$ is the symmetric space associated to $H$, there exists a unique $H$-invariant complex structure on $\calY$ such that the inclusion $H \rightarrow G$ is tight and positive. 
\end{enumerate}
\end{teor}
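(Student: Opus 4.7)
The plan is to reduce the statement to the characterization of tight subgroup inclusions given by Burger--Iozzi--Wienhard in \cite{BIW09}. The entire argument rests on the observation that \emph{tightness of a cocycle} translates into \emph{tightness of the inclusion} of its algebraic hull, after which the known classification does all the work.

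First, I would exploit the very definition of the algebraic hull to replace $\sigma$ by a cohomologous cocycle whose image is contained in $\mathbf{H}(\bbR)$. By Lemma \ref{lem:pullback:cohomology:class} this operation does not alter the induced map in bounded cohomology, and in particular preserves tightness. Writing $\sigma_H : \Gamma \times \Omega \to \mathbf{H}(\bbR)$ for this new cocycle and $i : \mathbf{H}(\bbR) \hookrightarrow G$ for the inclusion, the functoriality of $\upC^\bullet_b(\cdot)$ visible in the very formula of Section \ref{sec:no:boundary:map} yields the factorisation
$$
\upH^2_b(\sigma) \;=\; \upH^2_b(\sigma_H)\circ \upH^2_{cb}(i).
$$

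Second, I would transfer the tightness hypothesis on $\sigma$ to the inclusion $i$. Since pullback along $\sigma_H$ is norm non-increasing and $\lVert \kappa^b_G \rVert_\infty = \rk(\calX)/2$, the chain of inequalities
$$
\frac{\rk(\calX)}{2} \;=\; \lVert \upH^2_b(\sigma)(\kappa^b_G)\rVert_\infty \;\leq\; \lVert \upH^2_{cb}(i)(\kappa^b_G)\rVert_\infty \;\leq\; \lVert \kappa^b_G \rVert_\infty \;=\; \frac{\rk(\calX)}{2}
$$
collapses to an equality, which is exactly the condition defining a tight closed subgroup of a Hermitian Lie group in the sense of \cite{BIW09}. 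This step is the crux of the whole proof: it is the only place where the hypothesis on $\sigma$ is used, and everything afterwards concerns only subgroups.

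Third, I would invoke \cite[Theorem 3]{BIW09}, which fully characterises tight subgroups of a Hermitian Lie group: any such subgroup is reductive with compact centraliser, and its symmetric space carries a unique invariant complex structure for which the inclusion is tight and positive. Applying this to $\mathbf{H}(\bbR)\subset G$ and then passing to the identity component $H=\mathbf{H}(\bbR)^\circ$ yields conclusions (1), (2) and (3) simultaneously. The only delicate point I expect is the bookkeeping between $\mathbf{H}(\bbR)$ and its identity component $H$: one has to check that reductivity, compactness of the centraliser and the Hermitian structure on $\calY$ survive the passage to $H$, but this is routine since $H$ has finite index in $\mathbf{H}(\bbR)$ and $Z_G(H)$ contains $Z_G(\mathbf{H}(\bbR))$ only up to a compact factor coming from the (finite) component group. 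The substantive mathematical content is entirely contained in the factorisation step and the citation to \cite{BIW09}.
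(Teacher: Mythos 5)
Your proof is correct and follows essentially the same route as the paper: reduce tightness of $\sigma$ to tightness of the inclusion $\mathbf{H}(\bbR)\hookrightarrow G$ and then cite the Burger--Iozzi--Wienhard classification of tight subgroups (the paper invokes \cite[Theorem 7.1]{BIW09} in the proof, and \cite[Theorem 3]{BIW09} in the preceding discussion, so your citation is consistent). The only difference is that you make explicit the factorisation $\upH^2_b(\sigma)=\upH^2_b(\sigma_H)\circ\upH^2_{cb}(i)$ and the resulting norm chain, which the paper's one-line proof leaves implicit; this is a welcome clarification rather than a deviation.
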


\begin{proof}
Since the cocycle is tight and this condition is invariant along the $G$-cohomology class of $\sigma$, the inclusion $i:H \rightarrow G$ is tight. The conclusion follows by direct application of \cite[Theorem 7.1]{BIW09} which characterize tight subgroups of $G$. 
\end{proof}

The next step is to prove that maximal cocycles are tight in the sense of Definition \ref{def:tight:cocycle}, similarly for what happens in the case of representations \cite[Lemma 6.2]{BIW1}. This result will have important consequence for the algebraic hull of a maximal cocycle as a direct application of Theorem \ref{teor:alg:hull:tight}.

\begin{prop}\label{prop:maximal:tight}
Let $\Gamma \leq L$ be a torsion-free lattice and let $(\Omega,\mu_\Omega)$ be a standard Borel probability $\Gamma$-space. Consider a measurable cocycle $\sigma:\Gamma \times \Omega \rightarrow G$. If $\sigma$ is maximal then it is tight. 
\end{prop}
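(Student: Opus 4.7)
The plan is to chain together the norm estimates that were already used in the proof of Lemma \ref{lem:toledo:invariant}, but now in the reverse direction: maximality forces all the inequalities to be equalities, and one of those equalities is precisely the tightness condition.

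More concretely, I would start from the defining equation of the Toledo invariant
\[
\upT^2_b \circ \upH^2_b(\sigma)(\kappa^b_G) = \textup{t}_b(\sigma)\kappa^b_L,
\]
and compute the seminorm of both sides. On the right hand side, since $\lVert \kappa^b_L \rVert_\infty = 1/2$, I obtain
\[
\lVert \textup{t}_b(\sigma)\kappa^b_L \rVert_\infty = |\textup{t}_b(\sigma)|/2.
\]
On the left hand side, the transfer map $\upT^2_b$ is induced by $\hat\upT^2_b$, which is an average over a probability space and is therefore norm non-increasing, and the pullback $\upH^2_b(\sigma)$ is induced by $\upC^\bullet_b(\sigma)$, which is also an average over the probability space $(\Omega,\mu_\Omega)$ and is likewise norm non-increasing. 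Hence
\[
|\textup{t}_b(\sigma)|/2 \;=\; \lVert \upT^2_b \circ \upH^2_b(\sigma)(\kappa^b_G) \rVert_\infty \;\leq\; \lVert \upH^2_b(\sigma)(\kappa^b_G) \rVert_\infty \;\leq\; \lVert \kappa^b_G \rVert_\infty \;=\; \frac{\rk(\calX)}{2},
\]
where the last equality uses that $\beta_{\calX}$ is a canonical representative of $\kappa^b_G$ with $\lVert \beta_{\calX} \rVert_\infty \leq \rk(\calX)/2$ (see Section \ref{sec:hermitian:groups}).

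Now I would simply invoke maximality: if $\textup{t}_b(\sigma) = \rk(\calX)$, then the left-most quantity in the chain equals the right-most one, so every inequality is forced to be an equality. In particular
\[
\lVert \upH^2_b(\sigma)(\kappa^b_G) \rVert_\infty = \frac{\rk(\calX)}{2},
\]
which is exactly Definition \ref{def:tight:cocycle}. Thus $\sigma$ is tight.

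There is essentially no obstacle here: the argument is a one-line squeezing argument, and the only ingredient beyond what is already recorded in Lemma \ref{lem:toledo:invariant} is to read the chain of inequalities in the opposite direction. Everything rests on the norm non-increasing property of the transfer map and the pullback, both of which are built out of integrals against probability measures.
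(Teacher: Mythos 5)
Your proof is correct and follows essentially the same squeeze argument as the paper: compute norms on both sides of the defining equation of the Toledo invariant, use that the transfer and pullback maps are norm non-increasing, and let maximality force equality throughout. (The only tiny overstatement is that your cited bound $\lVert \beta_{\calX}\rVert_\infty \le \rk(\calX)/2$ gives $\lVert \kappa^b_G\rVert_\infty \le \rk(\calX)/2$ rather than equality, but only the inequality is actually needed for the squeeze, so the argument is unaffected.)
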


\begin{proof}
Suppose that $\sigma:\Gamma \times \Omega \rightarrow G$ is maximal. Then it holds $\textup{t}_b(\sigma)=\rk\calX$. By definition we have that 
$$
\textup{T}^2_b \circ \textup{H}^2_b(\sigma)(\kappa^b_G)=\rk(\calX) \kappa^b_L \ ,
$$
and hence it follows
$$
\frac{\rk(\calX)}{2} = \lVert \rk(\calX) \kappa^b_L \rVert_\infty=\lVert \textup{T}^2_b \circ \textup{H}^2_b(\sigma)(\kappa^b_G) \rVert \leq \rVert \textup{H}^2_b(\sigma)(\kappa^b_G) \rVert_\infty \ .
$$
Since the pullback is norm non-increasing, we have also that $\lVert \textup{H}^2_b(\sigma)(\kappa^b_G) \rVert_\infty \leq \frac{\rk(\calX)}{2}$, whence we must have equality and the cocycle $\sigma$ is tight. 
\end{proof} 

\section{Maximality and Zariski density}\label{sec:zariski:maximal}

In this section we will focus our attention to maximal measurable cocycles of surface groups which are Zariski dense. Under the necessary assumption of the existence of a boundary map we are going to show that the symmetric space associated to their algebraic hull is of \emph{tube type}. We will study also some regularity properties of the boundary map.  

\subsection{Algebraic hull and Zariski density}

Let $\Gamma \leq L$ be a torsion-free lattice in a finite connected covering of $\pu(1,1)$ and let $(\Omega,\mu_\Omega)$ be a standar Borel probability $\Gamma$-space. Consider a semisimple real algebraic group $\mathbf{G}$ such that $G:=\mathbf{G}(\bbR)^\circ$ is of Hermitian type. Suppose to have a maximal measurable cocycle $\sigma:\Gamma \times \Omega \rightarrow G$. Up to twisting $\sigma$ and restricting the image to its algebraic hull, we will assume that $\sigma$ is Zariski dense. 

Since in this section we will have to work with Equation \eqref{eq:formula}, we will need to assume that there exists a boundary map $\phi:\bbS^1 \times \Omega \rightarrow \check{\calS}_{\calX}$ associated to $\sigma$. 

It is quite natural to ask under which condition a boundary map exists. In the particular case that $G=\textup{PU}(1,1)$, every \emph{non-elementary} measurable cocycle $\sigma:\Gamma \times \Omega \rightarrow \textup{PU}(1,1)$ admits a boundary map $\phi:\bbS^1 \times \Omega \rightarrow \bbS^1$ by \cite[Proposition 3.3]{MonShal0}. In this situation non-elementary means that the algebraic hull of $\sigma$ does not lie inside an elementary subgroup of $\textup{PU}(1,1)$. 

For more general group of Hermitian type the existence of boundary map in the Zariki dense case is not known to the author. Since $\bbS^1 \times \Omega$ is an amenable $\Gamma$-space by \cite[Proposition 4.3.4]{zimmer:libro}, there exists a probability measure-valued map $\widehat{\phi}:\bbS^1 \times \Omega \rightarrow \calM^1(\check{\calS}_{\calX})$. To show that $\widehat{\phi}$ takes values in the set of Dirac measures and hence to obtain a boundary map into the Shilov boundary, one could try to verify that $\sigma$ acts proximally on $\check{\calS}_{\calX}$ (see \cite[Theorem 4.5]{furst:articolo}). For representations proximality boils down to the existence of a $\bbR$-regular element in the image (see \cite[Theorem 3.7]{margulis:libro} and \cite[Proposition 7.2]{BI04} for Hermitian Lie groups). Since the Zariski density implies the existence of a $\bbR$-regular element \cite[Theorem 3.4]{BI04}, a boundary map exists in the case of Zariski dense representations. In the context of Zariski dense measurable cocycle it is more likely that one needs the existence of a measurable family of $\bbR$-regular elements because of the characterization of proximality in terms of slices of the boundary map \cite[Lemma 3.3]{furst:articolo}. Unfortunately at the moment we cannot say something more relevant about it. 

\begin{oss}
In \cite{savini:sarti} the author, together with Sarti, analyze the case of maximal cocycle admitting a boundary map with Zariski dense slices. The latter property clearly is stronger than the Zariski density assumption we made here, but in \cite[Proposition 4.2]{savini:sarti} we proved that if $(\Omega,\mu_\Omega)$ is a $\Gamma$-ergodic space, the Zariski density of the cocycle implies the Zariski density of the slices of the associated boundary map. 
\end{oss}

Before studying more carefully the algebraic hull of $\sigma$, we need to remind briefly some notation regarding the triple products studied by Burger, Iozzi and Wienhard \cite{BIW07}. If we denote by $(\check{S}_{\calX})^{(3)}$ the set of triples of distinct points in $\check{\calS}_{\calX}$, the \emph{Hermitian triple product} is defined as

$$
\langle \langle \cdot , \cdot , \cdot \rangle \rangle: (\check{S}_ {\calX})^{(3)} \rightarrow \bbR^\times \backslash \bbC^\times \ ,
$$
$$
\langle\langle \xi,\eta,\omega \rangle\rangle=e^{i \pi p_{\calX} \beta_{\calX}(\xi,\eta,\omega)} \ \  \text{mod} \bbR^\times \ ,
$$
for every $(\xi,\eta,\omega) \in \check{S}^{(3)}_{\calX}$. The number $p_{\calX}$ is an integer defined in terms of the root system associated to $G$. 

Recall that $\check{\calS}_{\calX}$ is a homogeneous $G$-space, which can be realized as the quotient $G/Q$, where $Q=\mathbf{Q}(\bbR)$ and $\mathbf{Q}$ is a maximal parabolic subgroup of $\mathbf{G}$. Burger, Iozzi and Wienhard were able to extend the Hermitian triple product to a \emph{complex Hermitian triple product} $\langle\langle \cdot, \cdot, \cdot \rangle\rangle_{\bbC}$ defined on $(\mathbf{G}/\mathbf{Q})^3$ with values into $\Delta^\times \backslash A^\times$. Here $A^\times$ is the group $\bbC^\times \times \bbC^\times$ endowed with real structure $(\lambda,\mu) \mapsto (\overline{\mu},\overline{\lambda})$ and $\Delta^\times$ is the image of the diagonal embedding of $\bbC^\times$. More precisely, the authors \cite[Section 2.4]{BIW07} showed that the following diagram commutes
$$
\xymatrix{
(\check{\calS}_{\calX})^{(3)} \ar[rr]^{\langle \langle \cdot, \cdot, \cdot \rangle\rangle} \ar[d]^{(\imath)^3} && \bbR^\times \backslash \bbC^\times \ar[d]^\Delta\\
(\mathbf{G}/\mathbf{Q})^3 \ar[rr]^{\langle \langle \cdot, \cdot, \cdot \rangle\rangle_{\bbC}} && \Delta^\times \backslash A^\times \ .
}
$$
where $\imath:\check{\calS}_{\calX} \rightarrow \mathbf{G}/\mathbf{Q}$ is the map given by the $G$-equivariant identification of $\check{\calS}_{\calX}$ with $(\mathbf{G}/\mathbf{Q})(\bbR)$ and $\Delta$ is the diagonal embedding. 

It is worth mentioning that the complex Hermitian triple product is a rational function on $(\mathbf{G}/\mathbf{Q})^3$ since it can be written as a product of determinants of complex automorphy kernels (see \cite[Equation 2.4]{BIW07}).

Given any pair of transverse points $(\xi,\eta) \in (\check{\calS}_{\calX})^{(2)}$, following \cite[Section 5.1]{BIW07}, we denote by $\calO_{\xi,\eta}$ the open Zariski subset of $\mathbf{G}/\mathbf{Q}$ of points transverse to both $\xi$ and $\eta$. On the set $\calO_{\xi,\eta}$ we have that the map
$$
p_{\xi,\eta}:\calO_{\xi,\eta} \rightarrow \Delta^\times \backslash A^\times, \hspace{5pt} p_{\xi,\eta}(\omega):=\langle \langle \xi, \eta, \omega \rangle \rangle_{\bbC} \ ,
$$
is well-defined and algebraic, by the rationality of the complex Hermitian triple product. Burger, Iozzi and Wienhard \cite[Lemma 5.1]{BIW07} proved that if there exists an integer $m \in \bbZ \setminus \{ 0 \}$ such that $\omega \mapsto p_{\xi,\eta}(\omega)^m$ is constant, then $\calX$ must be of tube type. 

Now we can proceed proving the following theorem, which should be thought of as a generalization of \cite[Theorem 4.1(1)]{BIW1}. 
\begin{teor}\label{teor:symmetric:tube}
Let $L$ be a finite connected covering of $\pu(1,1)$ and let $\Gamma \leq L$ be a torsion-free lattice. Let $(\Omega,\mu_\Omega)$ be a standard Borel probability $\Gamma$-space and let $\sigma:\Gamma \times \Omega \rightarrow G$ be a measurable cocycle which is Zariski dense. Assume that there exists a boundary map $\phi:\bbS^1 \times \Omega \rightarrow \check{\calS}_{\calX}$. If $\sigma$ is maximal, then $\calX$ must be of tube type. 
\end{teor}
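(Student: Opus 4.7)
My strategy is to adapt the argument of \cite[Theorem 4.1(1)]{BIW1} to the measurable cocycle setting. The core idea is to use maximality together with formula (\ref{eq:formula}) to force the Bergmann cocycle to be pointwise maximal on triples in the essential image of $\phi$, then to transfer this information to the complex Hermitian triple product $\langle\langle\cdot,\cdot,\cdot\rangle\rangle_{\bbC}$, whose rationality together with the Zariski density of $\sigma$ should yield the tube type conclusion via \cite[Lemma 5.1]{BIW07}.

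First, I would plug a positively oriented triple $(\xi,\eta,\omega) \in (\bbS^1)^{(3)}$ into Equation (\ref{eq:formula}). Since $\beta_{\bbS^1}(\xi,\eta,\omega)=1/2$ on such triples, maximality $\textup{t}_b(\sigma)=\rk(\calX)$ makes the right-hand side equal to $\rk(\calX)/2$. As $|\beta_{\calX}|$ is pointwise bounded by $\rk(\calX)/2$, the integral attaining this bound forces
\begin{equation*}
\beta_{\calX}\bigl(\phi(\overline{g}\xi,s),\phi(\overline{g}\eta,s),\phi(\overline{g}\omega,s)\bigr)=\frac{\rk(\calX)}{2}
\end{equation*}
for almost every $(\overline{g},s) \in \Gamma\backslash L \times \Omega$. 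Because the $L$-action on $\bbS^1$ is orientation preserving and (up to the compact stabilizer) transitive on positively oriented triples, a Fubini argument promotes this to the statement that for almost every $s \in \Omega$ and almost every positively oriented triple $(a_1,a_2,a_3)$ of points in $\bbS^1$, the triple $(\phi(a_1,s),\phi(a_2,s),\phi(a_3,s))$ has maximal Bergmann value $\rk(\calX)/2$; by property (2) of the list at the end of Section \ref{sec:hermitian:groups} these triples are automatically pairwise transverse.

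The second step converts this into a constancy statement for the Hermitian triple product: on the conull set just obtained one has
\begin{equation*}
\langle\langle \phi(a_1,s),\phi(a_2,s),\phi(a_3,s)\rangle\rangle = e^{i\pi p_{\calX}\rk(\calX)/2} \pmod{\bbR^\times},
\end{equation*}
and via the commutative diagram relating $\langle\langle\cdot,\cdot,\cdot\rangle\rangle$ to its complex extension $\langle\langle\cdot,\cdot,\cdot\rangle\rangle_{\bbC}$, this transfers to constancy modulo $\Delta^\times$ of the rational function $\langle\langle\cdot,\cdot,\cdot\rangle\rangle_{\bbC}$ on the corresponding triples in $(\mathbf{G}/\mathbf{Q})^3$. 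Fixing a generic $s \in \Omega$ and a transverse pair $(\xi_0,\eta_0)$ in the essential image of $\phi(\cdot,s)$, the algebraic map $p_{\xi_0,\eta_0}:\calO_{\xi_0,\eta_0}\to \Delta^\times\backslash A^\times$ is therefore constant on the essential image of $a \mapsto \phi(a,s)$, intersected with $\calO_{\xi_0,\eta_0}$.

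The final and most delicate step is to upgrade this constancy to constancy on all of $\calO_{\xi_0,\eta_0}$. This requires that the essential image of $a \mapsto \phi(a,s)$ be Zariski dense in $\mathbf{G}/\mathbf{Q}$ for almost every $s$, a property that should follow from the assumed Zariski density of $\sigma$ by propagation through the $\sigma$-equivariance of $\phi$, in the spirit of \cite[Proposition 4.2]{savini:sarti} (which handles the ergodic case, possibly after passing to an ergodic decomposition of $(\Omega,\mu_\Omega)$). This is the main technical obstacle of the argument. Once this Zariski density is granted, $p_{\xi_0,\eta_0}$ is constant on a Zariski dense subset of its algebraic domain and hence everywhere constant. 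A suitable integer power $p_{\xi_0,\eta_0}^m$ then lifts to a constant $\bbC^\times$-valued rational function on $\calO_{\xi_0,\eta_0}$, and \cite[Lemma 5.1]{BIW07} forces $\calX$ to be of tube type, completing the proof.
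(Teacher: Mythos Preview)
Your overall strategy matches the paper's: use maximality and Equation~\eqref{eq:formula} to force pointwise maximality of the Bergmann cocycle on almost every triple, translate this into constancy of (a power of) the Hermitian triple product, and invoke \cite[Lemma 5.1]{BIW07} once the relevant Zariski density is established. The divergence is precisely at your ``main technical obstacle''.

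You argue slice by slice: fix a generic $s$, take a transverse pair $(\xi_0,\eta_0)$ in the essential image of $\phi(\cdot,s)$, and try to show this single slice is Zariski dense in $\mathbf{G}/\mathbf{Q}$, appealing to \cite[Proposition 4.2]{savini:sarti} (which needs ergodicity) and an unspecified ergodic decomposition argument. The paper sidesteps slice-wise density altogether via a twisting trick. Having fixed $\xi,\eta\in\bbS^1$, maximality makes $s\mapsto(\phi_s(\xi),\phi_s(\eta))$ a measurable map into the set of transverse pairs, on which $G$ acts transitively; a measurable section then produces $f:\Omega\to G$ with $f(s)^{-1}(\phi_s(\xi),\phi_s(\eta))=(\xi_0,\eta_0)$ \emph{independent of $s$}. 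After replacing $\sigma$ by $\sigma^f$ and $\phi$ by $\psi:=\phi^f$, every slice lands in the same $\calO_{\xi_0,\eta_0}$, and one takes the Zariski closure of the \emph{global} image $\psi(E^\Gamma)$ over a $\Gamma$-invariant full-measure set $E^\Gamma\subset\bbS^1\times\Omega$. This closure is preserved by the algebraic hull of $\sigma^f$, which is still $\mathbf{G}$, so $\psi(E^\Gamma)$ is Zariski dense in $\mathbf{G}/\mathbf{Q}$ with no ergodicity hypothesis, and $p_{\xi_0,\eta_0}^2$ is constant.

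Your route could in principle be completed, but you would need to verify that maximality, the existence of the boundary map, and Zariski density of the cocycle all descend to almost every ergodic component, none of which is entirely routine. The paper's normalization-by-twisting is the cleaner device and is worth internalizing.
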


\begin{proof}
Consider a positively oriented triple of distinct points $\xi,\eta,\omega \in \bbS^1$. By the maximality assumption we have that $\textup{t}_b(\sigma)=\rk(\calX)$. Recalling that the cocycle $\beta_{\bbS^1}$ is the orientation cocycle rescaled by $\frac{1}{2}$ and substituting the value of the Toledo invariant in Equation \eqref{eq:formula} we obtain
\begin{equation}\label{eq:formula:maximal}
\int_{\Gamma \backslash L}\int_\Omega \beta_{\calX}(\phi(\overline{g}\xi,s),\phi(\overline{g}\eta,s),\phi(\overline{g}\omega,s))d\mu_\Omega(s)d\mu_{\Gamma \backslash L}(\overline{g})=\frac{\rk(\calX)}{2} \ .
\end{equation}
Being $\beta_{\calX}$ bounded from above by $\frac{\rk(\calX)}{2}$, for almost every $\overline{g} \in \Gamma \backslash L$ and almost every $s \in \Omega$ it must hold
$$
\beta_{\calX}(\phi(\overline{g}\xi,s),\phi(\overline{g}\eta,s),\phi(\overline{g}\omega,s))=\frac{\rk(\calX)}{2} \ ,
$$
and by the equivariance of the map $\phi$ it follows
\begin{equation}\label{eq:almost:every:maximal}
\beta_{\calX}(\phi(g\xi,s),\phi(g\eta,s),\phi(g\omega,s))=\frac{\rk(\calX)}{2} \ ,
\end{equation}
for almost every $g \in L$ and almost every $s \in \Omega$. 

For almost every $s \in \Omega$, we know that the $s$-slice $\phi_s:\bbS^1 \rightarrow \check{\calS}_{\calX}, \ \phi_s(\xi):=\phi(\xi,s)$ is measurable by \cite[Lemma 2.6]{fisher:morris:whyte} and, by Equation \ref{eq:almost:every:maximal} it satisfies 
\begin{equation}\label{eq:maximal:slice}
\beta_{\calX}(\phi_s(g\xi),\phi_s(g\eta),\phi_s(g\omega))=\frac{\rk(\calX)}{2} \ ,
\end{equation}
for almost every $g \in L$. Since the same reasoning applies to a negatively oriented triple, we must have
\begin{equation}\label{eq:maximal:slice:triples}
\beta_{\calX}(\phi_s(\xi),\phi_s(\eta),\phi_s(\omega))=\pm \frac{\rk(\calX)}{2} \ ,
\end{equation}
for almost every triple $\xi,\eta,\omega$ such that $\beta_{\bbS^1}(\xi,\eta,\omega)=\pm 1/2$. Recalling that it holds
$$
\langle \langle \phi_s(\xi), \phi_s(\eta), \phi_s(\omega) \rangle \rangle=e^{i\pi p_{\calX} \beta_{\calX}(\xi,\eta,\omega)} \ \textup{mod} \bbR^\times \ ,
$$
Equation \eqref{eq:maximal:slice:triples} implies that
\begin{equation}\label{eq:hermitian:product}
\langle \langle \phi_s(\xi),\phi_s(\eta),\phi_s(\omega) \rangle\rangle^2 =e^{\pm i\pi p_{\calX} \rk(\calX)}= 1 \ \  \textup{mod} \bbR^\times \ ,
\end{equation}
for almost every $\xi,\eta,\omega \in \bbS^1$ distinct. 

 Fix now a pair $(\xi,\eta) \in (\bbS^1)^{2}$ such that Equation \eqref{eq:hermitian:product} holds for almost every $\omega \in \bbS^1$. A particular consequence of maximality is that $\phi_s(\xi)$ and $\phi_s(\eta)$ are transverse for almost every $s \in \Omega$. Denoting by $(\check{\calS}_{\calX})^{(2)}$ the subset of $\check{\calS}_{\calX}^2$ of pairs of transverse points, we have a map 
$$
\Omega \rightarrow (\check{\calS}_{\calX})^{(2)} \ , \ \  s \mapsto (\phi_s(\xi),\phi_s(\eta)) \ ,
$$
which is measurable by the measurability of $\phi$. By the transitivity of $G$ on pairs of transverse points $(\check{\calS}_{\calX})^{(2)}$, the latter can be thought of as the quotient of $G$ by the stabilizer $\textup{Stab}_G((\xi_0,\eta_0))$ 
of a fixed pair $(\xi_0,\eta_0)$. Hence we have a measurable map 
$$
\Omega \rightarrow G/\textup{Stab}_G((\xi_0,\eta_0)) \ ,
$$
and by composing with a measurable section $G/\textup{Stab}_G((\xi_0,\eta_0)) \rightarrow G$ given by \cite[Corollary A.8]{zimmer:libro}, we get a measurable function $f:\Omega \rightarrow G$ such that
$$
\phi_s(\xi)=f(s)\xi_0 \ , \hspace{10pt} \phi_s(\eta)=f(s)\eta_0 \ , \ (\xi_0,\eta_0,f(s)^{-1}\phi_s(\omega)) \ \text{is maximal} 
$$
for almost every $\omega \in \bbS^1, s \in \Omega$. For such a measurable function $f$, we consider $\sigma^f$ and the map $\phi^f$ as the ones defined in Section \ref{sec:measurable:cocycles}. For the ease of notation we are going to write $\alpha=\sigma^f$ and $\psi=\phi^f$. By the choice of the map $f$, Equation \eqref{eq:hermitian:product} can be rewritten as
$$
\langle \langle \xi_0,\eta_0,\psi_s(\omega) \rangle\rangle^2 = 1 \ \  \textup{mod} \bbR^\times \ ,
$$
for almost every $\omega \in \bbS^1, s \in \Omega$. The previous equation implies that $\psi_s(\omega) \in \calO_{\xi_0,\eta_0}$ for almost every $\omega \in \bbS^1$ and almost every $s \in \Omega$. We denote by $E$ the subset of full measure in $\bbS^1 \times \Omega$ such that $\psi_s(\omega) \in \calO_{\xi_0,\eta_0}$ for all $E$. Define 
$$
E^\Gamma:=\bigcap_{\gamma \in \Gamma} \gamma E \ ,
$$
which has full measure being a countable intersection of full measure sets (notice that $\Gamma$ preserves the measure class on $\bbS^1 \times \Omega$). Since $\sigma$ is Zariski dense, the cocycle $\alpha$ is Zariski dense too. Since the Zariski closure of $\psi(E^\Gamma)$ is preserved by the algebraic hull of $\alpha$ which coincides with $\mathbf{G}$, the set $\psi(E^\Gamma)$ is Zariski dense in $\mathbf{G}/\mathbf{Q}$, whence is $\psi(E^\Gamma)$ Zariski dense in $\calO_{\xi_0,\eta_0}$. Thus the map $\omega \rightarrow p_{\xi_0,\eta_0}(\omega)^2$ is constant on $\calO_{\xi_0,\eta_0}$ and $\calX$ is of tube type, as claimed.  
\end{proof}

An important consequence of the previous theorem is the following

\begin{cor}
Let $L$ be a finite connected covering of $\pu(1,1)$ and let $\Gamma \leq L$ be a torsion-free lattice. Let $(\Omega,\mu_\Omega)$ be a standard Borel probability $\Gamma$-space. Consider a maximal measurable cocycle $\sigma:\Gamma \times \Omega \rightarrow G$ and assume that there exists a boundary map $\phi:\bbS^1 \times \Omega \rightarrow \check{\calS}_{\calX}$. If $G$ is not of tube type, then $\sigma$ cannot be Zariski dense.
\end{cor}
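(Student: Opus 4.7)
The plan is essentially to observe that the corollary is the contrapositive of Theorem \ref{teor:symmetric:tube}, so no genuinely new work is required; the main task is just to verify that all the hypotheses of that theorem line up with the hypotheses of the corollary.

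First I would argue by contradiction: assume, in addition to the standing hypotheses, that $\sigma:\Gamma \times \Omega \rightarrow G$ is Zariski dense. Then the assumptions of Theorem \ref{teor:symmetric:tube} are all in force, namely $\Gamma$ is a torsion-free lattice in a finite connected covering of $\pu(1,1)$, $(\Omega,\mu_\Omega)$ is a standard Borel probability $\Gamma$-space, the cocycle $\sigma$ is maximal and Zariski dense, and a boundary map $\phi:\bbS^1 \times \Omega \rightarrow \check{\calS}_{\calX}$ exists by hypothesis.

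Applying Theorem \ref{teor:symmetric:tube}, we conclude that the Hermitian symmetric space $\calX$ associated to $G$ is of tube type. Since being of tube type is a property of the Hermitian symmetric space $\calX$, and since $G = \mathbf{G}(\bbR)^\circ$ is said to be of tube type precisely when its associated symmetric space $\calX$ is, this contradicts the hypothesis that $G$ is not of tube type. Hence $\sigma$ cannot be Zariski dense.

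No obstacle is expected: the content of the corollary is already contained in Theorem \ref{teor:symmetric:tube}, and the only thing to check is the equivalence between ``$G$ is of tube type'' and ``$\calX$ is of tube type,'' which follows directly from Definition \ref{def:hermitian:symmetric:space} together with the terminology introduced in Section \ref{sec:hermitian:groups}.
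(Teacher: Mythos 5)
Your proposal is correct and matches the paper's intent exactly: the paper states this corollary with no proof, presenting it as an immediate consequence of Theorem \ref{teor:symmetric:tube}, and your contrapositive argument is precisely that consequence spelled out.
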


As a consequence of Theorem \ref{teor:symmetric:tube}, if $\mathbf{H}$ is the algebraic hull of a maximal cocycle $\sigma$ and $H=\mathbf{H}(\bbR)$, then $H^\circ$ must be a Hermitian group of tube type. 

The following theorem collects all the properties we discovered about the algebraic hull of a maximal cocycle and it should be thought of as a statement equivalent to \cite[Theorem 5]{BIW1} in the context of measurable cocycles. 

\begin{repteor}{teor:maximal:alghull}
Let $\Gamma \leq L$ be a torsion-free lattice and let $(\Omega,\mu_\Omega)$ be a standard Borel probability $\Gamma$-space. Let $\mathbf{G}$ be a semisimple algebraic $\bbR$-group such that $G=\mathbf{G}(\bbR)^\circ$ is a Lie group of Hermitian type. Consider a measurable cocycle $\sigma:\Gamma \times \Omega \rightarrow G$. Denote by $\mathbf{H}$ the algebraic hull of $\sigma$ in $\mathbf{G}$ and set $H=\mathbf{H}(\bbR)^\circ$. If $\sigma$ is maximal, then 
\begin{enumerate}
	\item the algebraic hull $\mathbf{H}$ is reductive;
	\item the centralizer $Z_G(H)$ is compact;
\end{enumerate}
If additionally $\sigma$ admits a boundary map $\phi:\bbS^1 \times \Omega \rightarrow \check{\calS}_{\calY}$ into the Shilov boundary of the symmetric space $\calY$ associated to $H$, then 
\begin{enumerate}
	\item[(3)] the symmetric space $\calY$ is Hermitian of tube type;
	\item[(4)] it holds $\mathbf{H}(\bbR) \subset \textup{Isom}(\calT)$ for some maximal tube-type subdomain $\calT$ of $\calX$. Equivalently there exists a cocycle cohomologous to $\sigma$ which preserves $\calT$. 
\end{enumerate}
\end{repteor}

\begin{proof}
Being maximal, the cocycle $\sigma$ is tight by Proposition \ref{prop:maximal:tight}. Hence we can apply Theorem \ref{teor:alg:hull:tight} to get properties $1)$ and $2)$. Since we assumed the existence of a boundary map, by Theorem \ref{teor:symmetric:tube} the symmetric space $\calY$ must be of tube type, whence point $3)$. 

The inclusion $i: H \rightarrow G$ is tight because the cocycle $\sigma$ is tight. Since the symmetric space $\calY$ associated to $H$ is of tube type and the inclusion is tight, by \cite[Theorem 9(1)]{BIW09} there exists a unique maximal tube type subdomain $\calT$ of $\calX$ preserved by $H$. By uniqueness, $\calT$ must be preserved by the whole $\mathbf{H}(\bbR)$ and we are done.
\end{proof}

In the particular case that $G=\textup{PU}(n,1)$, then maximality of a measurable cocycle $\sigma:\Gamma \times \Omega \rightarrow \textup{PU}(n,1)$ is sufficient to recover that fact that $\sigma$ is cohomologous to a measurable cocycle which preserves a complex geodesic (in the case of representations see \cite[Theorem 8]{BIgeo},\cite[Theorem C]{koziarz:maubon}), as shown by the following

\begin{repprop}{prop:complex:geodesic}
Let $L$ be a finite connected covering of $\textup{PU}(1,1)$ and let $\Gamma \leq L$ be a torsion-free lattice. Let $(\Omega,\mu_\Omega)$ a standard Borel probability $\Gamma$-space. If a measurable cocycle $\sigma:\Gamma \times \Omega \rightarrow \textup{PU}(n,1)$ is maximal, then it is cohomologous to a measurable cocycle which preserves a complex geodesic. 
\end{repprop}

\begin{proof}
The proof is very similar to the one of Theorem \ref{teor:symmetric:tube}, so here we are going to omit some details. 

Since $\sigma:\Gamma \times \Omega \rightarrow \textup{PU}(n,1)$ is maximal, it cannot be elementary, otherwise its Toledo invariant would vanish. Thus we can apply \cite[Proposition 3.3]{MonShal0} to a get boundary map $\phi:\bbS^1 \times \Omega \rightarrow \partial_\infty \mathbb{H}^n_{\bbC}$.

Recall that in the particular case of $\textup{PU}(n,1)$ the Bergmann cocycle is nothing else than the Cartan cocycle rescaled by $\frac{1}{2}$. Choose now a triple of points $\xi,\eta,\omega \in \bbS^1$ with sign $\pm 1$. Using Equation \eqref{eq:formula}, the maximality assumption implies that 
$$
\int_{\Gamma \backslash L} \int_\Omega c_n(\phi(\overline{g}\xi,s),\phi(\overline{g}\eta,s),\phi(\overline{g}\omega,s))d\mu_\Omega(s)d\mu_{\Gamma \backslash L}(\overline{g})= \pm 1 \ ,
$$ 
where $c_n$ is the Cartan cocycle. As a consequence we have that 
\begin{equation}\label{eq:cartan:chain}
c_n(\phi_s(g\xi),\phi_s(g\eta),\phi_s(g\omega))=\pm 1 \ ,
\end{equation}
for almost every $s \in \Omega,\ g \in L$ and the sign is the same of the one of the triple $(\xi,\eta,\omega)$.

Fix now $\xi,\eta \in \bbS^1$ distinct. Recall that a \emph{chain} is the boundary of a complex geodesic and given two distinct points in $\partial_\infty \bbH^n_{\bbC}$ there is a unique chain passing through them. We denote by $C_s$ the chain passing through $\phi_s(\xi)$ and $\phi_s(\eta)$. Notice that $C_s$ depends measurably on $s \in \Omega$ because $\phi_s$ varies measurably with respect to it by the measurability of $\phi$. 

Since the Cartan invariant is equal to $\pm 1$ if and only if the distinct points in the triple lie on the same chain \cite[Lemma 6.3]{BIgeo}, by Equation \eqref{eq:cartan:chain} it holds
$$
\phi_s(\omega) \in C_s \ ,
$$
for almost every $s \in \Omega$ and almost every $\omega \in \bbS^1$. In this way we obtain that 
$$
\textup{EssIm}(\phi_s) \subset C_s \ , \ \ \ C_{\gamma s}=\sigma(\gamma,s)C_s \ , 
$$
where the second equation follows by the $\sigma$-equivariance of $\phi$. Choose now a chain $C_0$ in $\partial_\infty \mathbb{H}^n_{\bbC}$. The transitivity of the action of $\textup{PU}(n,1)$ on the space of chains, implies the existence of a measurable map $f:\Omega \rightarrow \textup{PU}(n,1)$ such that 
$$
f(s)(C_0)=C_s \ ,
$$
by a similar argument to the one in the proof of Theorem \ref{teor:symmetric:tube}. Thus the twisted cocycle $\sigma^f$ preserves $C_0$, indeed we have
$$
C_0=f(\gamma s)^{-1} C_{\gamma s}=f(\gamma s)^{-1}\sigma(\gamma,s)C_s=f(\gamma s)^{-1}\sigma(\gamma,s)f(s)C_0=\sigma^f(\gamma,s)C_0 \ ,
$$
and the claim is proved.
\end{proof}

\subsection{Regularity properties of boundary maps}\label{sec:boundary:map}

Imitating what happens in the context of representations, we are going to study the regularity properties of boundaries map associated to maximal measurable cocycles. Given a maximal Zariski dense measurable cocycle, under suitable hypothesis on the push-forward measure with respect to the slices of the boundary map, we are going to show that there exists an essentially unique equivariant measurable map with left-continuous (respectively right-continuous) slices which preserve transversality and maximality. We are going to follow the line of \cite[Section 5]{BIW1}. 

Before introducing the setup of the section, we say that a measurable map $\phi:\bbS^1 \rightarrow \check{\calS}_{\calX}$ is \emph{maximal} if it satisfies Equation \eqref{eq:maximal:slice}. Notice that almost every slice of a boundary map associated to a maximal cocycle is maximal. If a measurable map $\phi$ is maximal, we will similarly say that its essential graph $\textup{EssGr}(\phi)$ is \emph{maximal}. Recall that the essential graph is the support of the push-forward of the Lebesgue measure on $\bbS^1$ with respect to the map $\xi \mapsto (\xi,\phi(\xi)) \in \bbS^1 \times \check{\calS}_{\calX}$. 

\begin{setup}\label{setup:boundary:map}
From now until the end of the section we are going to assume the following 
\begin{itemize}
\item $\Gamma \leq L$ be a torsion-free lattice of a finite connected covering $L$ of $\pu(1,1)$;
\item $(\Omega,\mu_\Omega)$ is a standard Borel probability $\Gamma$-space;
\item $\sigma:\Gamma \times \Omega \rightarrow G$ is a maximal Zariski dense cocycle with boundary map $\phi:\bbS^1 \times \Omega \rightarrow \check{\calS}_{\calX}$;
\item denote by $\{E_s\}_{s \in \Omega}$ the family of essential graphs $E_s=\textup{EssGr}(\phi_s)$ associated to the slices.
\end{itemize}
\end{setup}

Having introduced the setup we needed, we can now move on proving the following 

\begin{lem}\label{lemma:maximality:triples}
In the situation of Setup \ref{setup:boundary:map}, suppose that $E_s$ is maximal. Let $(\xi_i,\eta_i) \in E_s$ for $i=1,2,3$ be points such that $\xi_1,\xi_2,\xi_3$ are pairwise distinct and $\eta_1,\eta_2,\eta_3$ are pairwise transverse. Then it holds
$$
\beta_{\calX}(\eta_1,\eta_2,\eta_3)=\rk(\calX) \beta_{\bbS^1}(\xi_1,\xi_2,\xi_3) \ . 
$$
\end{lem}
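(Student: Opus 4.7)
The plan is to reduce the statement about three specific points in $E_s$ to the almost-everywhere maximality relation for slices, by approximating the $(\xi_i,\eta_i)$ via points where the slice $\phi_s$ is actually defined, and then passing to the limit using that $\beta_{\calX}$ is continuous on the open subset of pairwise transverse triples in $(\check{\calS}_{\calX})^3$.

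First I would unpack the hypothesis that $E_s$ is maximal: by the definition right before the statement (together with Equation \eqref{eq:maximal:slice:triples}), this is equivalent to saying that for almost every $(\xi,\eta,\omega)\in(\bbS^1)^3$ one has
\[
\beta_{\calX}(\phi_s(\xi),\phi_s(\eta),\phi_s(\omega)) \;=\; \rk(\calX)\,\beta_{\bbS^1}(\xi,\eta,\omega).
\]
Next I would use that $E_s$ is the support of the push-forward of the Lebesgue measure under $\xi\mapsto(\xi,\phi_s(\xi))$. Choose small open neighborhoods $U_i\ni\xi_i$ (pairwise disjoint, which is possible since $\xi_1,\xi_2,\xi_3$ are distinct) and $V_i\ni\eta_i$, chosen small enough that any choice of $\eta_i'\in V_i$ remains pairwise transverse (this is an open condition on $(\check{\calS}_{\calX})^{(2)}$). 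Since $(\xi_i,\eta_i)\in E_s$, each set $A_i:=\{\xi\in U_i:\phi_s(\xi)\in V_i\}$ has positive Lebesgue measure.

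Then I would apply Fubini to the a.e.\ identity above: the set of triples $(\xi_1',\xi_2',\xi_3')\in A_1\times A_2\times A_3$ satisfying
\[
\beta_{\calX}(\phi_s(\xi_1'),\phi_s(\xi_2'),\phi_s(\xi_3')) \;=\; \rk(\calX)\,\beta_{\bbS^1}(\xi_1',\xi_2',\xi_3')
\]
is of full measure in $A_1\times A_2\times A_3$, hence in particular non-empty. Shrinking the neighborhoods along a basis $U_i^{(n)},V_i^{(n)}$ with $\bigcap_n U_i^{(n)}=\{\xi_i\}$, $\bigcap_n V_i^{(n)}=\{\eta_i\}$, I extract sequences $\xi_i^n\to\xi_i$ in $\bbS^1$ with $\phi_s(\xi_i^n)\to\eta_i$ in $\check{\calS}_{\calX}$, and such that the displayed equality holds for each $n$.

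Finally I would pass to the limit. The right-hand side is eventually constant equal to $\rk(\calX)\,\beta_{\bbS^1}(\xi_1,\xi_2,\xi_3)$, because the orientation cocycle $\beta_{\bbS^1}$ is locally constant on the open set of pairwise distinct triples of $\bbS^1$. For the left-hand side, by our shrinking the points $\phi_s(\xi_1^n),\phi_s(\xi_2^n),\phi_s(\xi_3^n)$ stay pairwise transverse for $n$ large, and $\beta_{\calX}$ is continuous on the open subset of pairwise transverse triples of $\check{\calS}_{\calX}$ (this is part of the construction of $\beta_{\calX}$ as a measurable extension of the K\"ahler cocycle, cf.\ \cite{BIW07}); therefore it converges to $\beta_{\calX}(\eta_1,\eta_2,\eta_3)$. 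Equating the two limits gives the identity. The main potential obstacle is ensuring that the continuity of $\beta_{\calX}$ really applies to our limit points, which is why the pairwise transversality hypothesis on $(\eta_1,\eta_2,\eta_3)$ is essential — without it one could not pass to the limit inside the Bergmann cocycle.
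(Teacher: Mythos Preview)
Your argument is correct and essentially identical to the paper's: both choose product neighborhoods $I_i\times U_i$ of $(\xi_i,\eta_i)$, use the definition of the essential graph to get positive-measure sets $A_i\subset I_i$ with $\phi_s(A_i)\subset U_i$, apply the almost-everywhere maximality on $A_1\times A_2\times A_3$, and then pass to the limit in $(\check{\calS}_{\calX})^{(3)}$. The only imprecision is your justification for the limit step: rather than invoking a general continuity of $\beta_{\calX}$ from \cite{BIW07}, the paper uses that in the situation of Setup~\ref{setup:boundary:map} the space $\calX$ is of tube type (Theorem~\ref{teor:symmetric:tube}), so the level sets $\calO_j$ are open and partition $(\check{\calS}_{\calX})^{(3)}$, whence $\overline{\calO_{\varepsilon\rk\calX}}\cap(\check{\calS}_{\calX})^{(3)}=\calO_{\varepsilon\rk\calX}$ --- this local constancy is exactly the ``continuity'' you need, and is the precise fact to cite.
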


\begin{proof}
Denote by $I_i$ for $i=1,2,3$ open paiwise non-intersecting intervals such that $\xi_i \in I_i$ and for any $\omega_i \in I_i$ it holds
$$
\beta_{\bbS^1}(\omega_1,\omega_2,\omega_3)=\beta_{\bbS^1}(\xi_1,\xi_2,\xi_3) \ . 
$$
Consider a open neighborhood $U_i$ of $\eta_i$, for $i=1,2,3$, such that $U_1 \times U_2 \times U_3 \in (\check{\calS}_{\calX})^{(3)}$. Then the measurable set 
$$
A_i=\{ \omega \in I_i \ | \ \phi_s(\omega_i) \in U_i \} \ ,
$$
is a set of positive measure, since $\eta_1,\eta_2,\eta_3$ are in the essential image of $\phi_s$. Since we assumed the slice $E_s$ is maximal, for almost every $(\omega_1,\omega_2,\omega_3) \in A_1 \times A_2 \times A_3$ we have that 
$$
\beta_{\calX}(\phi_s(\omega_1),\phi_s(\omega_2),\phi_s(\omega_3))=\rk(\calX)\beta_{\bbS^1}(\omega_1,\omega_2,\omega_3)=\rk(\calX)\beta_{\bbS^1}(\xi_1,\xi_2,\xi_3) \ .
$$

By setting $\varepsilon=2\beta_{\bbS^1}(\xi_1,\xi_2,\xi_3)$, we have that $|\varepsilon|=1$ and for almost every $(\omega_1,\omega_2,\omega_3) \in A_1 \times A_2 \times A_3$ we have that 
$$
(\phi_s(\omega_1),\phi_s(\omega_2),\phi_s(\omega_3)) \in U_1 \times U_2 \times U_3 \cap \calO_{\varepsilon \rk{\calX}} \ ,
$$
where $\calO_{\varepsilon \rk{\calX}}$ is the open set in $\check{\calS}_{\calX}^3$ on which $\beta_{\calX}$ is identically equal to $\rk(\calX)/2$. By the arbitrary choice of the neighborhood $U_i$, must have $(\eta_1,\eta_2,\eta_3) \in \overline{\calO_{\varepsilon \rk \calX}}$. 

Since we have that 
$$
\overline{\calO_{\varepsilon \rk \calX}} \cap (\check{\calS}_{\calX})^{(3)}=\overline{\calO_{\varepsilon \rk \calX}} \cap ( \sqcup_{i=0}^{\rk(\calX)}  \calO_{-\rk \calX + 2 i}) = \calO_{\varepsilon \rk \calX}
$$
and $(\eta_1,\eta_2,\eta_3) \in (\check{\calS}_{\calX})^{(3)}$, the triple is maximal and the claim follows. 
\end{proof}

In order to proceed we have now to discuss a condition we have to impose on the slices of the boundary map. Recall that $\check{\calS}_{\calX}$ can be identified with $G/Q$, where $Q$ is a maximal parabolic subgroup. We denote by $\textbf{V}_\xi \subset \mathbf{G}/\mathbf{Q}$ the Zariski closed set of points transverse to $\xi$ and set $V_\xi:=\textbf{V}_\xi(\bbR)$, the set of points transverse to $\xi$ in the Shilov boundary. 

Burger, Iozzi and Wienhard \cite[Proposition 5.2]{BIW1} proved that the boundary map associated to a Zariski dense representation has very strong properties, since its essential image intersects any proper Zariski closed set of the Shilov boundary in a set of measure zero. The author wonders under which hypothesis the same property should hold for almost every slice of a boundary map associated to a cocycle. Here we are going to assume it. More precisely

\begin{assumption}\label{ass:zariski:zero:measure}
In the situation of Setup \ref{setup:boundary:map}, we suppose that for every proper Zariski closed set $\mathbf{V} \subset \mathbf{G}/\mathbf{Q}$ it holds 
$$
\nu(\phi_s^{-1}(\mathbf{V}(\bbR)))=0 \ ,
$$
for almost every $s \in \Omega$. Here $\nu$ is the round measure on $\bbS^1$. 
\end{assumption}

Assumption \ref{ass:zariski:zero:measure} is satisfied in the trivial case $G=\textup{PU}(1,1)$ but also by cocycles which are cohomologous to a Zariski dense representation $\rho:\Gamma \rightarrow G$, as a consequence of \cite[Proposition 5.2]{BIW1}. We are not aware if this property can be extended to a wider class of cocycles. 

\begin{lem}\label{lemma:transverse}
Let $E_s$ be a maximal graph satisfying Assumption \ref{ass:zariski:zero:measure} and let $(\xi_1,\eta_1), (\xi_2,\eta_2) \in E_s$ with $\xi_1 \neq \xi_2$. Then $\eta_1$ and $\eta_2$ are transverse. 
\end{lem}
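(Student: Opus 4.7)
I argue by contradiction: suppose $\eta_1,\eta_2$ are not transverse. The strategy is to combine Assumption \ref{ass:zariski:zero:measure} (which lets us realise pairwise transversality conditions on a full-measure set), the approximation via the essential graph, Lemma \ref{lemma:maximality:triples} (which promotes such approximations to maximal triples), and property (4) of the Bergmann cocycle for tube-type $\calX$ (which forces a rigid convergence); recall that $\calX$ is of tube type by Theorem \ref{teor:symmetric:tube}.

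First I produce a generic witness $(\xi_3,\eta_3) \in E_s$ with $\xi_3 \notin \{\xi_1,\xi_2\}$ and $\eta_3$ transverse to both $\eta_1$ and $\eta_2$. Since the set of points of $\check{\calS}_\calX$ non-transverse to $\eta_i$ is a proper Zariski closed subset, Assumption \ref{ass:zariski:zero:measure} gives that its preimage under $\phi_s$ has measure zero, and the standard fact that $(\omega,\phi_s(\omega)) \in E_s$ for almost every $\omega \in \bbS^1$ lets us choose such an $\xi_3$. Next, using the essential graph property at $(\xi_i,\eta_i)$ together with a further application of Assumption \ref{ass:zariski:zero:measure} and Fubini, I extract sequences $\omega_{i,n} \to \xi_i$ with $\zeta_{i,n}:=\phi_s(\omega_{i,n}) \to \eta_i$ such that for every $n$ the points $\eta_1,\eta_2,\zeta_{1,n},\zeta_{2,n},\eta_3$ lie in ``general transverse position''; concretely, all the pairs among them relevant below are transverse.

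Applying Lemma \ref{lemma:maximality:triples} to $(\xi_2,\eta_2),(\omega_{2,n},\zeta_{2,n}),(\omega_{1,n},\zeta_{1,n}) \in E_s$ (distinct first coordinates for $n$ large, pairwise transverse second coordinates by construction) yields
$$
\beta_\calX(\eta_2,\zeta_{2,n},\zeta_{1,n}) = \rk(\calX)\,\beta_{\bbS^1}(\xi_2,\omega_{2,n},\omega_{1,n}) = \varepsilon\,\frac{\rk(\calX)}{2}
$$
for some fixed $\varepsilon \in \{+1,-1\}$ (the sign of the circular orientation stabilises once the $\omega_{i,n}$ are close enough to $\xi_i$ on fixed sides). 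Hence the triple $(\eta_2,\zeta_{2,n},\zeta_{1,n})$ is maximal for every $n$, and property (4) of the tube-type Bergmann cocycle applied with $\xi:=\eta_2$, $\xi_n:=\zeta_{2,n} \to \eta_2$, $\xi_n':=\zeta_{1,n}$ forces $\zeta_{1,n} \to \eta_2$. Since by construction $\zeta_{1,n} \to \eta_1$, this yields $\eta_1=\eta_2$. To rule out this remaining possibility, rerun the scheme with $(\xi_3,\eta_3)$ in place of $(\omega_{1,n},\zeta_{1,n})$: Lemma \ref{lemma:maximality:triples} applied to $(\xi_1,\eta),(\omega_{2,n},\zeta_{2,n}),(\xi_3,\eta_3) \in E_s$ (setting $\eta:=\eta_1=\eta_2$) gives that $(\eta,\zeta_{2,n},\eta_3)$ is maximal for every $n$, and property (4) with $\xi:=\eta$, $\xi_n:=\zeta_{2,n} \to \eta$, $\xi_n':=\eta_3$ forces $\eta_3 \to \eta$, contradicting the transversality (in particular the distinctness) of $\eta_3$ and $\eta$.

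The main technical obstacle is the simultaneous realisation of all the pairwise transversalities required to invoke Lemma \ref{lemma:maximality:triples}; this reduces to intersecting finitely many full measure subsets of $\bbS^1$ coming from Assumption \ref{ass:zariski:zero:measure} (applied to the various proper Zariski closed non-transversality loci in $\check{\calS}_\calX$ and $\check{\calS}_\calX \times \check{\calS}_\calX$) with the positive measure sets $\{\omega \in I_i : \phi_s(\omega) \in U_i\}$ supplied by the essential graph property, using Fubini to treat the conditions that involve two varying points at once. Once this combinatorial bookkeeping is in place, the double invocation of property (4) cleanly produces the contradiction in both sub-cases $\eta_1\neq\eta_2$ and $\eta_1=\eta_2$.
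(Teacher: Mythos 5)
Your stage~1 has a gap that actually invalidates the argument: you derive $\eta_1=\eta_2$ without ever using the hypothesis that $\eta_1,\eta_2$ fail to be transverse. Every transversality you impose in order to invoke Lemma~\ref{lemma:maximality:triples} concerns only $\eta_2,\zeta_{1,n},\zeta_{2,n}$, never $\eta_1$; so if the step were correct it would show that any two points of $E_s$ lying over distinct arguments have the same second coordinate, i.e.\ that $\phi_s$ is essentially constant. That is impossible for a maximal slice, since $\beta_{\calX}(\phi_s(\omega_1),\phi_s(\omega_2),\phi_s(\omega_3))=\pm\rk(\calX)/2$ on a positive-measure set of triples while $\beta_{\calX}$ vanishes on degenerate ones. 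The error is in the appeal to property~(4) of the tube-type Bergmann cocycle: that statement does not assert that in a maximal triple a collision of two of the entries forces the third to collide as well. Already in the rank-one tube-type case $\check{\calS}_{\calX}=\bbS^1$ every triple of distinct points is maximal, so taking $\xi=1$, $\xi_n=e^{i/n}\to\xi$, $\xi'_n=-1$ gives a maximal triple with $\xi_n\to\xi$ yet $\xi'_n\not\to\xi$. Property~(4) is only usable when the third sequence is squeezed between the other two, which is exactly the nested structure $\omega_n\in((\xi,\xi_n))$ that the paper sets up later in Lemma~\ref{lemma:one:point}; your $\zeta_{1,n}$ tracks $\eta_1$, not anything near $\eta_2$, so no squeezing is available.

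The paper's proof is much shorter and avoids property~(4) entirely. Using Assumption~\ref{ass:zariski:zero:measure} it picks $\alpha_1\in((\xi_1,\xi_2))$ and $\alpha_2\in((\xi_2,\xi_1))$ with $\phi_s(\alpha_1),\phi_s(\alpha_2)$ transverse to $\eta_1,\eta_2$ and to each other, computes $\beta_{\calX}(\eta_i,\phi_s(\alpha_1),\phi_s(\alpha_2))$ via Lemma~\ref{lemma:maximality:triples}, and then applies the strict cocycle identity for $\beta_{\calX}$ on the four points $\eta_1,\eta_2,\phi_s(\alpha_1),\phi_s(\alpha_2)$ together with the uniform bound $|\beta_{\calX}|\le\rk(\calX)/2$ to force $\beta_{\calX}(\eta_1,\eta_2,\phi_s(\alpha_2))=\pm\rk(\calX)/2$. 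Property~(2) of the tube-type Bergmann cocycle (maximal triples are pairwise transverse) then gives the claim directly. This cocycle-identity-plus-boundedness step is the ingredient your proposal is missing; property~(4) is a squeezing lemma, not a substitute for the coboundary relation.
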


\begin{proof}
For any distinct $\xi,\omega \in \bbS^1$ we denote by 
$$
((\xi,\omega)):=\{ \eta \in \bbS^1 \ | \ \beta_{\bbS^1}(\xi,\zeta,\omega)=\frac{1}{2} \} \ .
$$
Thanks to Assumption \ref{ass:zariski:zero:measure}, we can suppose that the essential image of the slice $\phi_s$ meets any Zariski closed set in a measure zero set. Hence we can find $\alpha_1 \in ((\xi_1,\xi_2))$ such that $\phi_s(\alpha_1)$ is transverse to both $\eta_1$ and $\eta_2$. In the same way there will exist a point $\alpha_2 \in ((\xi_2,\xi_1))$ such that $\phi_s(\alpha_2)$ is transverse to $\eta_1$ and $\eta_2$. 

Using now jointly Lemma \ref{lemma:maximality:triples} and the cocycle condition on $\beta_{\calX}$ we get
\begin{align*}
0&=\beta_{\calX}(\phi_s(\alpha_1),\eta_1,\phi_s(\alpha_2))-\beta_{\calX}(\eta_1,\eta_2,\phi_s(\alpha_2))+\\
 &+\beta_{\calX}(\eta_1,\phi_s(\alpha_1),\phi_s(\alpha_2))-\beta_{\calX}(\eta_1,\phi_s(\alpha_1),\eta_2))=\\
 &=\frac{\rk(\calX)}{2}-\beta_{\calX}(\eta_1,\eta_2,\phi_s(\alpha_2))+\frac{\rk(\calX)}{2}-\beta_{\calX}(\eta_1,\phi_s(\alpha_1),\eta_2) \ . \\
\end{align*}
The previous line implies that $\beta_{\calX}(\eta_1,\eta_2,\phi_s(\alpha_2))=\frac{\rk(\calX)}{2}$ and hence $\eta_1$ and $\eta_2$ are transverse. 
\end{proof}

Given now any subset $A \subset \bbS^1$ we put
$$
F_A^s:=\{ \eta \in \check{\calS}_{\calX} \  | \ \exists \  \xi \in A \ : \ (\xi,\eta) \in E_s \} \ . 
$$
We define also $$((\xi,\omega]]:=((\xi,\omega)) \cup \{ \omega \} \ . $$

\begin{lem}\label{lemma:one:point}
Let $s \in \Omega$ be a point such that $E_s$ is a maximal graph satisfying Assumption \ref{ass:zariski:zero:measure}. Let $\xi \neq \omega$ be two points in $\bbS^1$. Then $\overline{F^s_{((\xi,\omega]]}} \cap F^s_\xi$ and $\overline{F^s_{[[\omega,\xi))}} \cap F^s_{\xi}$ consist each of one point.  
\end{lem}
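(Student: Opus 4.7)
The plan is to split the claim into existence and uniqueness, using compactness of the Shilov boundary for the former and the tube-type alternative (property (4) of $\beta_{\calX}$) for the latter. I will only argue for $\overline{F^s_{((\xi,\omega]]}}\cap F^s_{\xi}$; the other case is formally symmetric after reversing orientation.

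For existence, recall that $E_s$ is the support of $(\mathrm{id}\times\phi_s)_\ast \nu$, so its first projection is $\nu$-conull. Pick $(\xi_n,\eta_n)\in E_s$ with $\xi_n\in((\xi,\omega]]$ and $\xi_n\to\xi$. By compactness of $\check{\calS}_{\calX}$ a subsequence satisfies $\eta_n\to\eta$, and closedness of $E_s$ gives $(\xi,\eta)\in E_s$. Hence $\eta\in F^s_\xi\cap\overline{F^s_{((\xi,\omega]]}}$, proving non-emptiness.

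For uniqueness, suppose $\eta_1,\eta_2$ both lie in the intersection and, aiming for a contradiction, that $\eta_1\neq\eta_2$. Choose $(\xi_n^i,\eta_n^i)\in E_s$ with $\xi_n^i\in((\xi,\omega]]$ and $\eta_n^i\to\eta_i$. First I observe that $\xi_n^i\to\xi$: any cluster value $\xi_\infty^i\neq\xi$ would give $(\xi_\infty^i,\eta_i)\in E_s$ by closedness, and together with $(\xi,\eta_i)\in E_s$ this would force $\eta_i$ to be transverse to itself by Lemma \ref{lemma:transverse}, a contradiction. Then I extract the two sequences from disjoint one-sided subarcs of $((\xi,\omega]]$ shrinking to $\xi$, which is possible because each $\eta_i$ is already known to be accumulated from $((\xi,\omega]]$; this guarantees $\xi_n^1\neq\xi_n^2$ for every $n$. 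Passing to a further subsequence, the oriented triple $(\xi,\xi_n^1,\xi_n^2)$ has constant sign, say $\beta_{\bbS^1}(\xi,\xi_n^1,\xi_n^2)=1/2$. By Lemma \ref{lemma:transverse} the points $\eta_1,\eta_n^1,\eta_n^2$ are pairwise transverse, so Lemma \ref{lemma:maximality:triples} applies and yields
\begin{equation*}
\beta_{\calX}(\eta_1,\eta_n^1,\eta_n^2)=\rk(\calX)\,\beta_{\bbS^1}(\xi,\xi_n^1,\xi_n^2)=\tfrac{\rk(\calX)}{2},
\end{equation*}
i.e.\ the triple $(\eta_1,\eta_n^1,\eta_n^2)$ is maximal for every $n$. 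Now Theorem \ref{teor:symmetric:tube} tells me that $\calX$ is of tube type, so property (4) of $\beta_{\calX}$ listed at the end of Section \ref{sec:hermitian:groups} applies: since $\eta_n^1\to\eta_1$ and each triple is maximal, we must have $\eta_n^2\to\eta_1$, forcing $\eta_2=\eta_1$.

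The main obstacle I expect is the careful selection of approximating sequences with $\xi_n^1\neq\xi_n^2$ and with the oriented triple of constant sign, since the essential graph $E_s$ is not a priori a functional graph and one must avoid a degenerate choice that collapses the two sequences onto common first coordinates; this is resolved by drawing them from disjoint one-sided neighborhoods of $\xi$ inside $((\xi,\omega]]$, after which the rest of the argument runs mechanically from Lemmas \ref{lemma:transverse}, \ref{lemma:maximality:triples}, Theorem \ref{teor:symmetric:tube} and property (4) of the Bergmann cocycle.
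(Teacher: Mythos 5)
Your proof is correct and follows essentially the same strategy as the paper: approximate by sequences in $E_s$, apply Lemma \ref{lemma:transverse} for pairwise transversality of the images, Lemma \ref{lemma:maximality:triples} to convert orientation data on $\bbS^1$ into maximality of $\beta_{\calX}$, and property (4) of the Bergmann cocycle on tube-type domains to force the two candidate limits to coincide. Two small variations are worth noting. First, you prove non-emptiness explicitly (full projection of $E_s$, compactness of $\check{\calS}_{\calX}$, closedness of $E_s$), whereas the paper's proof only establishes at-most-one-ness; you also verify that the first coordinates of the approximating sequences converge to $\xi$ via a self-transversality contradiction, which the paper simply stipulates. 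Second, the paper constructs the second approximating sequence nested strictly between $\xi$ and the terms of the first, quoting a stability fact from \cite[Lemma 5.8]{BIW1} to justify this; the nesting automatically fixes $\beta_{\bbS^1}(\xi,\omega_n,\xi_n)=\tfrac12$. You instead arrange distinct first coordinates by interleaving and pass to a subsequence of constant orientation, observing that this is enough because maximality is a statement about $|\beta_{\calX}|$ only. Both routes feed into the same application of property (4), so the two arguments are interchangeable; yours is slightly more self-contained.
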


\begin{proof}
We prove that $\overline{F^s_{((\xi,\omega]]}} \cap F^s_\xi$ consists of exactly one point. The same strategy can be applied to $\overline{F^s_{[[\omega,\xi))}} \cap F^s_{\xi}$  to prove the same statement. 

Let $\eta,\eta' \in \overline{F^s_{((\xi,\omega]]}} \cap F^s_\xi$ and consider $(\xi_n,\eta_n) \in E_s$ a sequence such that 
$$
\xi_n \in ((\xi,\omega]], \ \ \ \lim_{n \to \infty} \xi_n=\xi, \ \ \ \lim_{n \to \infty} \eta_n=\eta \ .
$$

Given any $\zeta \in ((\xi,\omega))$, we can apply the same reasoning of \cite[Lemma 5.8]{BIW1}, to say that 
$$
\overline{F^s_{((\xi,\omega]]}} \cap F^s_\xi = \overline{F^s_{((\xi,\zeta]]}} \cap F^s_\xi \ .
$$
Thanks to the previous equation, consider a sequence $(\omega_n,\eta'_n) \in E_s$ so that 
$$
\omega_n \in ((\xi,\xi_n)), \ \ \  \lim_{n \to \infty} \omega_n=\xi, \ \ \ \lim_{n \to \infty} \eta'_n=\eta' \ .
$$

Applying Lemma \ref{lemma:transverse} we have that $\eta,\eta'_n,\eta_n$ are pairwise transverse. Hence we can apply Lemma \ref{lemma:maximality:triples} to the triples $(\xi,\omega_n,\xi_n)$ and $(\eta,\eta_n',\eta_n)$ to get
$$
\beta_{\calX}(\eta,\eta_n',\eta_n)=\rk(\calX)\beta_{\bbS^1}(\xi,\omega_n,\xi_n)=\frac{\rk(\calX)}{2} \ .
$$
Since $\lim_{n \to \infty} \eta_n=\eta$, Property $4)$ of Section \ref{sec:hermitian:groups} of the Bergmann cocycles $\beta_{\calX}$ forces $\lim_{n \to \infty} \eta'_n=\eta$ and hence $\eta=\eta'$. 
\end{proof}

In this way wet get immediately the following 

\begin{cor}\label{cor:two:points}
Let $s \in \Omega$ be a point such that $E_s$ is a maximal graph satisfying Assumption \ref{ass:zariski:zero:measure}. For every $\xi \in \bbS^1$ the set $F^s_\xi$ contains either one or two points. 
\end{cor}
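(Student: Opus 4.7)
The plan is to bracket $F^s_\xi$ by showing it is simultaneously nonempty and of cardinality at most two, with the two candidates being precisely the distinguished limit points supplied by Lemma \ref{lemma:one:point}.

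First I would fix $\xi \in \bbS^1$ and choose any auxiliary point $\omega \neq \xi$. Applying Lemma \ref{lemma:one:point} immediately produces two (possibly coinciding) points $\eta^+$ and $\eta^-$ of $F^s_\xi$, namely the unique elements of $\overline{F^s_{((\xi,\omega]]}} \cap F^s_\xi$ and $\overline{F^s_{[[\omega,\xi))}} \cap F^s_\xi$ respectively. In particular $F^s_\xi$ is nonempty, giving the lower bound $|F^s_\xi| \geq 1$.

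For the upper bound, I would take an arbitrary $\eta \in F^s_\xi$ and aim to show $\eta \in \{\eta^+, \eta^-\}$. Since $(\xi,\eta) \in E_s$ and $E_s$ is by construction the topological support of the push-forward of the (non-atomic) Lebesgue measure $\nu$ on $\bbS^1$ under the map $\zeta \mapsto (\zeta, \phi_s(\zeta))$, every open neighborhood of $(\xi,\eta)$ has positive push-forward measure, and this mass cannot be concentrated on the fiber over $\xi$ because that fiber has zero $\nu$-measure. Consequently I can extract a sequence $\zeta_n \to \xi$ with $\zeta_n \neq \xi$ and $\phi_s(\zeta_n) \to \eta$. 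Since $\bbS^1 \setminus \{\xi\} = ((\xi,\omega]] \cup [[\omega,\xi))$, after passing to a subsequence either all $\zeta_n$ lie in $((\xi,\omega]]$ or all lie in $[[\omega,\xi))$. In the first case $\eta \in \overline{F^s_{((\xi,\omega]]}}$, which combined with $\eta \in F^s_\xi$ forces $\eta = \eta^+$ by Lemma \ref{lemma:one:point}; the second case gives $\eta = \eta^-$ symmetrically. Thus $F^s_\xi \subseteq \{\eta^+, \eta^-\}$ and $|F^s_\xi| \leq 2$.

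The main obstacle is the sequential approximation step: I have to justify rigorously that the topological-support interpretation of $E_s$ yields approximating points $\zeta_n \neq \xi$. This requires unpacking the definition of the essential graph as the support of the push-forward measure, combined with the non-atomicity of $\nu$, to rule out that all the mass near $(\xi,\eta)$ comes from the single vertical fiber above $\xi$. Once this approximation is secured, the remainder of the argument is a purely topological dichotomy feeding straight into Lemma \ref{lemma:one:point}.
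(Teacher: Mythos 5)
Your proof is correct and takes essentially the same route as the paper: the paper simply asserts the decomposition $F^s_\xi = \bigl( \overline{F^s_{[[\omega_-,\xi))}} \cap F^s_\xi \bigr) \cup \bigl( \overline{F^s_{((\xi,\omega_+]]}} \cap F^s_\xi \bigr)$ and then invokes Lemma \ref{lemma:one:point}, whereas you supply the support/approximation argument that actually justifies this decomposition. One small point to make explicit: to conclude $\eta \in \overline{F^s_{((\xi,\omega]]}}$ from $\phi_s(\zeta_n) \to \eta$ you need $(\zeta_n,\phi_s(\zeta_n)) \in E_s$ (so that $\phi_s(\zeta_n)$ actually lies in $F^s_{((\xi,\omega]]}$); this holds for $\nu$-almost every $\zeta$ precisely because $E_s$ is the support of the push-forward, and since your $\zeta_n$ are drawn from sets of positive measure this is easily arranged, but it should be stated.
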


\begin{proof}
Consider $\omega_-, \xi, \omega_+ \in \bbS^1$ and let $\eta \in F^s_\xi$. Since it holds
$$
F^s_\xi=\left( \overline{F^s_{[[\omega_-,\xi))}} \cap F^s_{\xi} \right) \cup \left( \overline{F^s_{((\xi,\omega_+]]}} \cap F^s_\xi \right)\ ,
$$
the claim follows by Lemma \ref{lemma:one:point}.
\end{proof}

We are know ready to prove the main theorem of the section which extends in some sense \cite[Theorem 5.1]{BIW1} to the context of measurable cocycles. 

\begin{teor}\label{teor:boundary:map}
In the situation of Assumption \ref{ass:zariski:zero:measure}, there exist two measurable maps
$$
\phi^\pm:\bbS^1 \times \Omega \rightarrow \check{\calS}_{\calX} 
$$
such that
\begin{enumerate}
\item the slice $\phi^+_s:\bbS^1 \rightarrow \check{\calS}_{\calX}$ is right continuous for almost every $s \in \Omega$;
\item the slice $\phi^-_s:\bbS^1 \rightarrow \check{\calS}_{\calX}$ is left continuous for almost every $s \in \Omega$;
\item the maps $\phi^\pm$ are measurable and $\sigma$-equivariant;
\item for every $\xi \neq \omega $ in $\bbS^1$ and almost every $s \in \Omega$, $\phi^\varepsilon_s(\xi)$ is transverse to $\phi^\delta_s(\omega)$, where $\varepsilon, \delta \in \{  \pm \}$;
\item almost every slice is monotone, that is for every $\xi,\omega,\zeta \in \bbS^1$ and almost every $s \in \Omega$ it holds
$$
\beta_{\calX}(\phi_s^\varepsilon(\xi),\phi_s^\delta(\omega),\phi_s^\theta(\zeta))=\rk(\calX) \beta_{\bbS^1}(\xi,\omega,\zeta) \ ,
$$
where $\varepsilon,\delta,\theta \in \{ \pm \}$. 
\end{enumerate}
\end{teor}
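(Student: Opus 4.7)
I would work on the conull set of $s \in \Omega$ for which the essential graph $E_s$ is a maximal graph satisfying Assumption \ref{ass:zariski:zero:measure}. For such an $s$ and any $\xi \in \bbS^1$, Lemma \ref{lemma:one:point} gives a singleton $\overline{F^s_{((\xi,\omega]]}} \cap F^s_\xi$ for every auxiliary $\omega \neq \xi$; the nesting $F^s_{((\xi,\omega_1]]} \subset F^s_{((\xi,\omega_2]]}$, whenever $\omega_1$ lies between $\xi$ and $\omega_2$ on the short arc, shows that this singleton does not depend on $\omega$. I would \emph{define} $\phi^+_s(\xi)$ to be its unique element, and dually $\phi^-_s(\xi) \in \overline{F^s_{[[\omega,\xi))}} \cap F^s_\xi$. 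On the complementary null set in $\Omega$, extend $\phi^\pm$ by an arbitrary measurable choice.

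\textbf{Properties (1)--(3).} For right-continuity of $\phi^+_s$, if $\omega_n \to \xi^+$ then for $n$ large $\omega_n \in ((\xi,\omega]]$ and $\phi^+_s(\omega_n) \in \overline{F^s_{((\xi,\omega]]}}$. Any accumulation point of $\phi^+_s(\omega_n)$ lies simultaneously in $\overline{F^s_{((\xi,\omega]]}}$ and in $F^s_\xi$ (since $\omega_n \to \xi$ produces sequences in $E_s$ over points converging to $\xi$), hence equals the unique point $\phi^+_s(\xi)$; left-continuity of $\phi^-_s$ is symmetric. For measurability, I fix a decreasing sequence of rationals $q_n \downarrow 0$; the map $(\xi,s) \mapsto \phi(\xi+q_n,s)$ is measurable, and along this sequence the values of $\phi(\xi+q_n,s)$ cluster only at $\phi^+_s(\xi)$ for almost every $(\xi,s)$, so an essential-limit construction (equivalently a Castaing-type measurable selection applied to the family $s \mapsto E_s$ of closed sets) yields joint measurability. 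Right-continuity in $\xi$ then extends the resulting function from a conull subset of $\bbS^1$ to all of $\bbS^1$ uniquely. The $\sigma$-equivariance of $\phi^\pm$ follows from that of $\phi$: the $L$-action preserves the cyclic orientation on $\bbS^1$ and $\sigma(\gamma,s)$ sends $E_s$ to $E_{\gamma s}$, so the canonical construction commutes with the actions.

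\textbf{Monotonicity (5), modulo transversality.} For pairwise distinct $\xi,\omega,\zeta \in \bbS^1$ and signs $\varepsilon,\delta,\theta$, I would choose sequences $\xi_n,\omega_n,\zeta_n \in \bbS^1$ with $(\xi_n,\phi_s(\xi_n)),(\omega_n,\phi_s(\omega_n)),(\zeta_n,\phi_s(\zeta_n)) \in E_s$ and $\xi_n\to\xi$ from the $\varepsilon$-side (similarly for $\omega_n,\zeta_n$), so that $\phi_s(\xi_n)\to\phi^\varepsilon_s(\xi)$ etc. By Lemma \ref{lemma:transverse} each approximating triple is pairwise transverse, and by Lemma \ref{lemma:maximality:triples}
$$
\beta_{\calX}(\phi_s(\xi_n),\phi_s(\omega_n),\phi_s(\zeta_n)) = \rk(\calX)\,\beta_{\bbS^1}(\xi,\omega,\zeta)
$$
for $n$ large (the right-hand side is locally constant when the triple stays pairwise distinct). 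Property $3$ of Section \ref{sec:hermitian:groups} says that the extremal cells $\calO_{\pm\rk(\calX)}$ are open \emph{and} closed in $(\check{\calS}_{\calX})^{(3)}$; hence, provided the limit triple $(\phi^\varepsilon_s(\xi),\phi^\delta_s(\omega),\phi^\theta_s(\zeta))$ still lies in $(\check{\calS}_{\calX})^{(3)}$, it inherits the same value of $\beta_{\calX}$, which is exactly (5). So (5) reduces to (4).

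\textbf{Transversality (4) --- the main obstacle.} The delicate step is to rule out that $\phi^\varepsilon_s(\xi)$ and $\phi^\delta_s(\omega)$ coalesce or fail to be transverse in the limit, even though every approximating pair is transverse by Lemma \ref{lemma:transverse}. Following the strategy of \cite[Section 8]{BIWL} and \cite[Theorem 5.2]{BIW1}, I would introduce an auxiliary point $\tau \in \bbS^1$ whose image $\phi_s(\tau)$ is, thanks to Assumption \ref{ass:zariski:zero:measure}, in general position with respect to the Zariski closed conditions \emph{non-transverse to $\phi^\varepsilon_s(\xi)$} and \emph{non-transverse to $\phi^\delta_s(\omega)$}; these cut out proper closed subsets of $\mathbf{G}/\mathbf{Q}$, so a full-measure choice of $\tau$ is available. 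Applying Lemma \ref{lemma:maximality:triples} to the triples $(\xi_n, \omega_n, \tau)$ in $E_s$ and passing to the limit, a failure of transversality between $\phi^\varepsilon_s(\xi)$ and $\phi^\delta_s(\omega)$ would produce a sequence of maximal triples in $(\check{\calS}_{\calX})^{(3)}$ with two coordinates coalescing to a common point while the third $\phi_s(\tau)$ stays fixed and transverse; property $4$ of the Bergmann cocycle in Section \ref{sec:hermitian:groups} then forces $\phi_s(\tau)$ to converge to that same point, contradicting the general-position choice of $\tau$. Hence transversality (4) holds, and feeding it back into the previous paragraph closes the proof of (5).
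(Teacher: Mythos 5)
Your construction of $\phi^\pm$ matches the paper's: you define each slice via Lemma \ref{lemma:one:point}, and properties (1)--(3) are handled in a similar (equally brief) spirit. The place where your argument diverges — and where there is a genuine gap — is in how you treat properties (4) and (5).

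You view $\phi^\varepsilon_s(\xi)$ purely as a limit of $\phi_s(\xi_n)$ with $\xi_n \to \xi$ from one side, and then try to push transversality and maximality through the limit. This overlooks the crucial feature of the construction: by Lemma \ref{lemma:one:point}, the set $\overline{F^s_{((\xi,\omega]]}} \cap F^s_\xi$ is a point of $F^s_\xi$, so $(\xi,\phi^\pm_s(\xi))$ is itself an element of $E_s$, not merely an accumulation point of the graph at some nearby fibre. Once one records this, (4) is an immediate application of Lemma \ref{lemma:transverse} to the two pairs $(\xi,\phi^\varepsilon_s(\xi)), (\omega,\phi^\delta_s(\omega)) \in E_s$ with $\xi \neq \omega$, and (5) then follows from Lemma \ref{lemma:maximality:triples} applied directly to the three pairs $(\xi,\phi^\varepsilon_s(\xi)), (\omega,\phi^\delta_s(\omega)), (\zeta,\phi^\theta_s(\zeta)) \in E_s$, the transversality hypothesis of that lemma having been supplied by (4). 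No limiting argument is needed, and the "main obstacle" you isolate is not an obstacle.

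Beyond being unnecessary, your limit argument for (4) has a real hole. You reduce a putative failure of transversality between $\phi^\varepsilon_s(\xi)$ and $\phi^\delta_s(\omega)$ to "two coordinates coalescing to a common point" and then invoke property (4) of the Bergmann cocycle from Section \ref{sec:hermitian:groups}. But non-transversality in the Shilov boundary is strictly weaker than equality: two distinct points can fail to lie in the open $G$-orbit of $(\check{\calS}_{\calX})^2$. Your argument says nothing about that case, so it does not establish (4) even as a backup route. The fix is simply the observation above: $(\xi,\phi^\pm_s(\xi)) \in E_s$, so Lemma \ref{lemma:transverse} applies verbatim.

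A smaller remark: your sign convention for $\phi^\pm$ is swapped relative to the paper's displayed formulas (you attach $+$ to the right arc $((\xi,\omega]]$ and $-$ to the left arc $[[\omega,\xi))$, while the paper does the opposite). This does not affect the substance, but you should double-check which convention actually produces the right-continuous slice as claimed in (1), since the labelling in the paper's display and the labelling in your proposal cannot both be consistent with the stated continuity properties.
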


\begin{proof}
By assumption we know that for almost every $s \in \Omega$, the slice $\phi_s$ is maximal and it satisfies Assumption \ref{ass:zariski:zero:measure}. For any such $s$, we define for every $\xi \in \bbS^1$ the following maps
$$
\phi_s^+(\xi)=\overline{F^s_{[[\omega_-,\xi))}} \cap F^s_{\xi}\ , 
\phi_s^-(\xi)=\overline{F^s_{((\xi,\omega_+]]}} \cap F^s_\xi  \ ,
$$
where $\omega_- ,\xi,\omega_+$ is a positively oriented triple in $\bbS^1$ and $\omega_\pm$ are arbitrary. The right continuity of $\phi^+_s$ and the left continuity of $\phi^-_s$ are clear by their definitions. We can define
$$
\phi^\pm:\bbS^1 \times \Omega \rightarrow \check{\calS}_{\calX}, \ \phi_s^\pm(\xi,s):=\phi_s^\pm(\xi) \ .
$$
The measurability of the functions $\phi^\pm_s$ comes from the fact the slice $\phi_s^\pm$ are measurable and varies measurably with respect to $s$ by the measurability of $\phi$. The $\sigma$-equivariance of the latter implies that $\phi^\pm$ are $\sigma$-equivariant. 

Finally property $4)$ follows by Lemma \ref{lemma:transverse} and property $5)$ follows by Lemma \ref{lemma:maximality:triples}. 
\end{proof}

\bibliographystyle{amsalpha}
\bibliography{biblionote}

\newcommand{\etalchar}[1]{$^{#1}$}
\providecommand{\bysame}{\leavevmode\hbox to3em{\hrulefill}\thinspace}
\providecommand{\MR}{\relax\ifhmode\unskip\space\fi MR }
\providecommand{\MRhref}[2]{%
  \href{http://www.ams.org/mathscinet-getitem?mr=#1}{#2}
}
\providecommand{\href}[2]{#2}
\begin{thebibliography}{BGPG06}

\bibitem[BBF{\etalchar{+}}14]{BBFIPP}
M.~Bucher, M.~Burger, R.~Frigerio, A.~Iozzi, C.~Pagliantini, and M.~B.
  Pozzetti, \emph{Isometric properties of relative bounded cohomology}, J.
  Topol. Anal. \textbf{6} (2014), no.~1, 1--25.

\bibitem[BBI13]{bucher2:articolo}
M.~Bucher, M.~Burger, and A.~Iozzi, \emph{A dual interpretation of the
  {G}romov--{T}hurston proof of {M}ostow rigidity and volume rigidity for
  representations of hyperbolic lattices}, Trends in harmonic analysis,
  Springer INdAM Ser., Springer, Milan, 2013, pp.~47--76.

\bibitem[BBI18]{BBIborel}
\bysame, \emph{The bounded borel class and complex representations of
  $3$-manifold groups}, Duke Math. \textbf{167} (2018), no.~17, 3129--3169.

\bibitem[BFS13]{sauer:companion}
U.~Bader, A.~Furman, and R.~Sauer, \emph{Efficient subdivision in hyperbolic
  groups and applications}, Groups Geom. Dyn. \textbf{7} (2013), 263--292.

\bibitem[BGPG03]{garcia:geom}
S.~B. Bradlow, O.~Garcia-Prada, and P.~B. Gothen, \emph{Surface group
  representations in $pu(p,q)$ and higgs bundles}, J . Diff. Geom. \textbf{64}
  (2003), no.~1, 111--170.

\bibitem[BGPG06]{garcia:dedicata}
\bysame, \emph{Maximal surface group representations in isometry groups of
  classical hermitian symmetric spaces}, Geom. Dedicata \textbf{122} (2006),
  185--213.

\bibitem[BI02]{burger:articolo}
M.~Burger and A.~Iozzi, \emph{Boundary maps in bounded cohomology},
  Geom.~Funct.~Anal. \textbf{12} (2002), no.~2, 281--292, Appendix to
  ``Continuous bounded cohomology and applications to rigidity theory" by M.
  Burger and N. Monod.

\bibitem[BI04]{BI04}
\bysame, \emph{Bounded kahler class rigidity of actions on hermitian symmetric
  spaces}, Annales scientifiques de l'\'Ecole Normale Sup\'erieure \textbf{37}
  (2004), no.~1, 77--103.

\bibitem[BI07a]{BIgeo}
\bysame, \emph{Bounded differential forms, generalized {M}ilnor-{W}ood
  inequality and an application to deformation rigidity}, Geom. Dedicata
  \textbf{125} (2007), 1--23.

\bibitem[BI07b]{BIcartan}
\bysame, \emph{Measurable cartan theorem and applications to deformation
  rigidity in the complex hyperbolic geometry}, Pure Math. Appl. Q. \textbf{4}
  (2007), no.~1, 1--22.

\bibitem[BIW07]{BIW07}
M.~Burger, A.~Iozzi, and A.~Wienhard, \emph{Hermitian symmetric spaces and
  kahler rigidity}, Transform. Groups \textbf{12} (2007), no.~1, 5--32.

\bibitem[BIW09]{BIW09}
\bysame, \emph{Tight homomorphisms and hermitian symmetric spaces}, Geom. Func.
  Anal. \textbf{19} (2009), no.~3, 578--721.

\bibitem[BIW10]{BIW1}
\bysame, \emph{Surface group representations with maximal {T}oledo invariant},
  Ann. of Math. (2) \textbf{172} (2010), 517--566.

\bibitem[BLIW05]{BIWL}
M.~Burger, F.~Labourie, A.~Iozzi, and A.~Wienhard, \emph{Maximal
  representations of surface groups: Symplectic anosov structures}, Quarterly
  Journal of Pure and Applied Mathematics \textbf{1} (2005), no.~3, 555--601,
  Special Issue: In Memory of Armand Borel, Part 2 of 3.

\bibitem[BM02]{burger2:articolo}
M.~Burger and N.~Monod, \emph{Continuous bounded cohomology and applications to
  rigidity theory}, Geom. Funct. Anal. \textbf{12} (2002), 219--280.

\bibitem[DLP]{duchesne:pozzetti}
B.~Duchesne, Jean Lecureux, and Maria~Beatrice Pozzetti, \emph{Boundary maps
  and maximal representations on infinite dimensional hermitian symmetric
  spaces}, https://arxiv.org/abs/1810.10208.

\bibitem[FG06]{fock:hautes}
V.~Fock and A.~Goncharov, \emph{Moduli spaces of local systems and higher
  teichmuller theory}, Inst. Hautes \'Etudes Sci. Publ. Math. \textbf{103}
  (2006), 1--211.

\bibitem[FG07]{Fock:adv}
\bysame, \emph{Moduli spaces of convex projective structures on surfaces}, Adv.
  Math. \textbf{208} (2007), no.~1, 249--273.

\bibitem[FM]{FM:grom}
R.~Frigerio and M.~Moraschini, \emph{Gromov's theory of multicomplexes with
  applications to bounded cohomology and simplicial volume},
  https://arxiv.org/pdf/1808.07307.pdf.

\bibitem[FM77]{feldman:moore}
J.~Feldman and C.~C. Moore, \emph{Ergodic equivalence relations, cohomology,
  and von neumann algebras}, Trans.~Amer.~Math.~Soc. \textbf{234} (1977),
  289--324.

\bibitem[FMW04]{fisher:morris:whyte}
D.~Fisher, D.~W. Morris, and K.~Whyte, \emph{Nonergodic actions, cocycles and
  superrigidity}, New York J.~Math. \textbf{10} (2004), 249--269.

\bibitem[Fur73]{furst:articolo73}
H.~Furstenberg, \emph{Boundary theory and stochastic processes on homogeneous
  spaces}, Harmonic analysis on homogeneous spaces, Proc. Sympos. Pure Math.,
  Vol. XXVI, Williams Coll., Williamstown, Mass., 1972, Amer. Math. Soc.,
  Providence, R.I., 1973, pp.~193--229.

\bibitem[Fur81]{furst:articolo}
H.~Furstenberg, \emph{Rigidity and cocycles for ergodic actions of semisimple
  lie groups}, Bourbaki Seminar \textbf{1979/80} (1981), 273--292, Lecture
  Notes in Math.

\bibitem[Gro82]{Grom82}
M.~Gromov, \emph{Volume and bounded cohomology}, Publ. Math. Inst. Hautes
  \'Etudes Sci. \textbf{56} (1982), 5--99.

\bibitem[Gui]{Guichard}
O.~Guichard, \emph{Sur le repr\'esentations des groupes de surface}, preprint.

\bibitem[Hel01]{Hel01}
S.~Helgason, \emph{Differential geometry, lie groups, and symmetric spaces},
  corrected reprint of 1978 ed., Graduate Studies in Mathematics, vol.~34,
  American Mathematical Society, 2001.

\bibitem[Her91]{Her91}
L.~Hern\'andez, \emph{Maximal representations of surface groups in bounded
  symmetric domains}, Trans. Amer. Math. Soc. \textbf{34} (1991), no.~1,
  405--420.

\bibitem[Hit92]{hitchin}
N.~J. Hitchin, \emph{Lie groups and teichmuller space}, Topology \textbf{7}
  (1992), no.~3, 449--473.

\bibitem[Ioz02]{iozzi02:articolo}
A.~Iozzi, \emph{Bounded cohomology, boundary maps, and representations into
  $\textrm{Homeo}_+({S}^1)$ and ${S}{U}(n,1)$}, Rigidity in Dynamics and
  Geometry (Cambridge, 2000), Springer, Berlin, 2002.

\bibitem[Iva87]{Ivanov}
N.~V. Ivanov, \emph{Foundation of the theory of bounded cohomology},
  J.~Soviet.~Math. \textbf{37} (1987), 1090--1114.

\bibitem[KM08]{koziarz:maubon}
V.~Koziarz and J.~Maubon, \emph{Harmonic maps and representations of
  non-uniform lattices of $pu(m,1)$}, Ann. Ins. Fourier (Grenoble) \textbf{58}
  (2008), no.~2, 507--558.

\bibitem[KM17]{koziarz:maubon:2}
\bysame, \emph{Maximal representations of uniform complex hyperbolic lattices},
  Annals of Mathematics \textbf{185} (2017), 493--540.

\bibitem[Lab06]{labourie}
F.~Labourie, \emph{Anosov flows, surface groups and curves in projective
  space}, Invent. Math. \textbf{165} (2006), no.~1, 51--114.

\bibitem[Mar91]{margulis:libro}
G.~A. Margulis, \emph{Discrete subgroups of semisimple lie groups}, A series of
  modern surveys in mathematics, Springer Verlag, 1991.

\bibitem[Mon01]{monod:libro}
N.~Monod, \emph{Continuous bounded cohomology of locally compact groups},
  Lecture notes in Mathematics, no. 1758, Springer-Verlag, Berlin, 2001.

\bibitem[MSa]{moraschini:savini}
M.~Moraschini and A.~Savini, \emph{A matsumoto/mostow result for zimmer's
  cocycles of hyperbolic lattices}, https://arxiv.org/pdf/1909.00846.pdf.

\bibitem[MSb]{moraschini:savini:2}
\bysame, \emph{Multiplicative constants and maximal measurable cocycles in
  bounded cohomology}, https://arxiv.org/pdf/1912.09731.pdf.

\bibitem[MS04]{MonShal0}
N.~Monod and Y.~Shalom, \emph{Cocycle superrigidity and bounded cohomology for
  negatively curved spaces}, J. Differential Geom. \textbf{67} (2004),
  395--455.

\bibitem[MS06]{MonShal}
\bysame, \emph{Orbit equivalence rigidity and bounded cohomology}, Ann. of
  Math. (2) \textbf{164} (2006), 825--878.

\bibitem[Poz15]{Pozzetti}
M.~B. Pozzetti, \emph{Maximal representations of complex hyperbolic lattices
  into $su(m, n)$}, Geom.~Funct.~Anal. \textbf{25} (2015), 1290--1332.

\bibitem[Sav]{savini3:articolo}
A.~Savini, \emph{Borel invariant for zimmer cocycles of hyperbolic
  $3$-manifolds groups}, https://arxiv.org/pdf/1907.02525.pdf.

\bibitem[SS]{savini:sarti}
F.~Sarti and A.~Savini, \emph{Superrigidity of maximal measurable cocycles of
  complex hyperbolic lattices}, preprint, https://arxiv.org/pdf/2002.03628.pdf.

\bibitem[Tol89]{Toledo89}
D.~Toledo, \emph{Representations of surface groups in complex hyperbolic
  space}, J. Diff. Geom. \textbf{29} (1989), 125--133.

\bibitem[Zim]{zimmer:preprint}
R.~J. Zimmer, \emph{Algebraic topology of ergodic lie group actions and
  measurable foliations}, preprint.

\bibitem[Zim80]{zimmer:annals}
\bysame, \emph{Strong rigidity for ergodic actions of semisimple lie groups},
  Ann.~of Math. \textbf{112} (1980), no.~3, 511--529.

\bibitem[Zim84]{zimmer:libro}
\bysame, \emph{Ergodic theory and semisimple groups}, Monographs in
  Mathematics, vol.~81, Birkh{\"a}user Verlag, Basel, 1984.

\end{thebibliography}
\end{document}